\numberwithin{equation}{section}
\let\la=\lambda
\let\f=\frac
\def\R{\mathbb R}
\def\eps{\varepsilon}
\def\de{\delta}
\def\tcr{\textcolor{red}}
\newcommand{\beq}{\begin{equation}}
\newcommand{\eeq}{\end{equation}}
\newcommand{\ben}{\begin{eqnarray}}
\newcommand{\een}{\end{eqnarray}}
\newcommand{\beno}{\begin{eqnarray*}}
\newcommand{\eeno}{\end{eqnarray*}}
\newtheorem{theorem}{Theorem}[section]
\newtheorem{lemma}[theorem]{Lemma}
\newtheorem{proposition}[theorem]{Proposition}
\begin{document}

\title[Enhanced dissipation and Taylor dispersion]
{Enhanced dissipation and Taylor dispersion by a parallel shear flow in an infinite cylinder with unbounded cross section}

\author[T. Li]{Te Li}%
\address[T. Li]
 {Academy of Mathematics $\&$ Systems Science, The Chinese Academy of
Sciences, Beijing 100190, CHINA.}
\email{teli@amss.ac.cn}

\author[L. Zhang]{Le Zhang}%
\address[L. Zhang]
 {Academy of Mathematics $\&$ Systems Science, The Chinese Academy of
Sciences, Beijing 100190, CHINA.  } \email{zhangle211@mails.ucas.ac.cn}

\date{\today}

\begin{abstract}
In this paper, we investigate the long-time behavior of a passive scalar advected by a parallel shear flow in an infinite cylinder with unbounded cross section, in the regime where the viscosity coefficient satisfies $\nu \ll 1$, and in arbitrary spatial dimension. 
Under the assumption of an infinite cylinder, that is, $x \in \mathbb{R}$, the corresponding Fourier frequency $k$ (often referred to as the streamwise wave number) also ranges over the whole real line $\mathbb{R}$. 
In this setting, the enhanced dissipation phenomenon only occurs for high frequencies $\nu \le |k|$, whereas for low frequencies $|k| \le \nu$ only the decay of Taylor dispersion appears. 
It is worth noting that, in the case where $x \in \mathbb{T}$, the Fourier frequency does not contain low frequencies near zero, and thus enhanced dissipation occurs for all nonzero modes. Previously, Coti Zelati and Gallay study in \cite{CG23} the case of infinite cylinders with bounded cross sections. 
For the unbounded case considered here, we find that a non-degeneracy condition at infinity is also required.

\end{abstract}

\maketitle

\tableofcontents

\section{Introduction}
In this paper, we study the long-time behavior of a passive scalar advected by a parallel shear flow in an infinite cylinder with unbounded cross section, in arbitrary spatial dimension, focusing on the regime where the viscosity coefficient is small, namely $\nu \ll 1$. To state our results, we fix some notation. Let $\Omega\subset\mathbb{R}^d$
be a smooth domain and let $v:\overline{\Omega}\to\mathbb{R}$ be a smooth function.
We study the evolution of a passive scalar inside the infinite cylinder
\[
  \Sigma \;=\; \mathbb{R}\times\Omega \;\subset\; \mathbb{R}^{d+1},
\]
subject to the shear flow $\vec{u}(y) = \bigl(v(y),0,\cdots,0\bigr)^{\top}$. The scalar density
$f(x,y,t)$ solves the advection–diffusion equation
\begin{equation}\label{eq:feq}
  \partial_t f(x,y,t) + \vec{u}(y)\cdot\,\nabla f(x,y,t)
  \;=\; \nu\,\Delta f(x,y,t)\,, \qquad (x,y)\in\Sigma,\; t>0,
\end{equation}
where $\nu>0$ is the molecular diffusivity and
$\Delta=\partial_x^2+\Delta_y$ is the Laplace operator in all variables $(x,y)\in\Sigma$.
We impose homogeneous Neumann boundary conditions on $\partial\Sigma \;=\; \mathbb{R}\times\partial\Omega$.

Equation~\eqref{eq:feq} is invariant under horizontal translations, so it is natural
to take the partial Fourier transform in $x$:
\begin{equation}\label{eq:fourier}
  \hat f(k,y,t)
  \;=\; \int_{\mathbb{R}} f(x,y,t)\,\mathrm{e}^{-\mathrm{i}kx}\,\mathrm{d}x\,,
  \qquad k\in\mathbb{R},\; y\in\Omega,\; t>0.
\end{equation}
The Fourier mode $\hat f$ satisfies
\begin{equation}\label{eq:feq2}
  \partial_t \hat f(k,y,t) + \mathrm{i}k\,v(y)\,\hat f(k,y,t)
  \;=\; \nu\bigl(-k^2+\Delta_y\bigr)\hat f(k,y,t)\,,
  \qquad y\in\Omega,\; t>0,
\end{equation}
where the horizontal wavenumber $k\in\mathbb{R}$ now acts as a parameter. The purely
horizontal diffusion term $-\nu k^2$ in \eqref{eq:feq2} plays only a minor role in the
regime of interest and can be factored out by setting
\begin{equation}\label{eq:gdef}
  \hat f(k,y,t) \;=\; \mathrm{e}^{-\nu k^2 t}\,g(k,y,t)\,,
  \qquad k\in\mathbb{R},\; y\in\Omega,\; t>0.
\end{equation}
This yields the “hypoelliptic” evolution
\begin{equation}\label{eq:geq}
  \partial_t g(k,y,t) + \mathrm{i}k\,v(y)\,g(k,y,t)
  \;=\; \nu\,\Delta_y g(k,y,t)\,,
  \qquad y\in\Omega,\; t>0,
\end{equation}
which will be the starting point of our analysis. Throughout, we impose homogeneous
Neumann boundary conditions for $g$ on $\partial\Omega$, although the same arguments
apply (with only cosmetic changes) under homogeneous Dirichlet conditions. We also
assume $k\neq 0$, since for $k=0$ equation~\eqref{eq:geq} reduces to the classical
heat equation in $\Omega$.

It is standard that, for any $g_0\in L^2(\Omega)$, equation~\eqref{eq:geq} admits a
unique global solution
\[
  t\mapsto g(k,t)\in C^0\!\bigl([0,\infty),L^2(\Omega)\bigr),
  \qquad g(k,0)=g_0.
\]
(Here and below, $g(k,t)$ denotes the function $y\mapsto g(k,y,t)\in L^2(\Omega)$.)
Our main objective is to quantify the decay rate of solutions to \eqref{eq:geq}
as $t\to\infty$.

We begin with the case where the unbounded cross–section $\Omega$ is one–dimensional, with
$\Omega$ taken to be either $\mathbb{R}$, $(-\infty, L_1)$, or $(L_2, \infty)$.
Our first main result is formulated as follows.

\begin{theorem}[One-dimensional case]\label{thm:main1}
Assume that $d = 1, m\in\mathbb{N}^{*}, v : \overline{\Omega} \to \mathbb{R}$ belongs to $C^m(\overline{\Omega})$ and satisfies the following conditions:  
\begin{enumerate}
    \item[(1)] The derivatives of $v$ up to order $m$ do not vanish simultaneously, namely: \begin{equation}\label{eq:vder} |v'(y)| + |v''(y)| + \cdots + |v^{(m)}(y)| > 0,
\qquad \text{for all } y \in \overline{\Omega}.
\end{equation} 
    \item[(2)] $v$ is non-degenerate at infinity, namely: 
  \begin{equation}\label{non-degenerate at infinity}
\exists\, c_0>0 \quad\text{s.t.} \quad  \varliminf_{|y|\to\infty}|v'(y)|\geq c_0>0.
\end{equation} 
\end{enumerate}
Here, $\Omega$ is taken to be either $\mathbb{R}$, $(-\infty, L_1)$, or $(L_2, \infty)$, where $L_1, L_2$ are arbitrary real numbers. 

Then, there exist positive constants $C$ and $c$ such that, for every $\nu > 0$, 
every $k \neq 0$, and every initial datum $g_0 \in L^2(\Omega)$, the solution 
of \eqref{eq:geq} satisfies, for all $t \ge 0$, 
\begin{equation}\label{lamnuk}
  \|g(k,t)\|_{L^2(\Omega)} \,\le\, 
  C\,\mathrm{e}^{-c \lambda_{\nu,k} t}\,\|g_0\|_{L^2(\Omega)}\,, 
\end{equation}
where 
\begin{equation*}
  \lambda_{\nu,k} \,=\,
  \begin{cases} 
 \nu^{\tfrac{m}{m+2}} |k|^{\tfrac{2}{m+2}}, & \text{if }~ 0 < \nu \le |k|\,, \\[1ex]
  \dfrac{k^2}{\nu}, & \text{if }~ 0 < |k| \le \nu\,.
  \end{cases}
\end{equation*}
\end{theorem}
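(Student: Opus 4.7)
The plan is to reduce the exponential decay estimate \eqref{lamnuk} to a uniform resolvent bound on the imaginary axis and then verify this bound separately in the two regimes $\nu\le|k|$ and $|k|\le\nu$. Write $H_k := -\nu\,\partial_y^2 + ik\,v(y)$ for the generator of \eqref{eq:geq}; since $\mathrm{Re}\,\langle H_k\phi,\phi\rangle = \nu\|\phi'\|_{L^2}^2\ge 0$, the homogeneous Neumann realization is $m$-accretive, so the quantitative Gearhart--Pr\"uss theorem (as in Wei or Helffer--Sj\"ostrand) gives
\[
 \|e^{-tH_k}\|_{L^2\to L^2} \le e\cdot\exp\Bigl(-\tfrac{t}{e\,\Psi(H_k)}\Bigr),\qquad \Psi(H_k) := \sup_{\lambda\in\mathbb R}\|(H_k - i\lambda)^{-1}\|_{L^2\to L^2}.
\]
Accordingly, it suffices to establish $\Psi(H_k)\le C\lambda_{\nu,k}^{-1}$, equivalently, the a priori bound $\lambda_{\nu,k}\|\phi\|\le C\|(H_k-i\lambda)\phi\|$ uniformly in $\lambda\in\mathbb R$, $\phi\in\mathrm{Dom}(H_k)$, $k\ne 0$, and $\nu>0$.

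For the high-frequency regime $\nu\le|k|$, fix $\lambda\in\mathbb R$ and set $M(y) := kv(y)-\lambda$. By condition (2), $|M(y)|\to\infty$ linearly as $|y|\to\infty$, so the set $\{M=0\}$ is bounded and (being a zero set of a function with the non-degeneracy (1)) consists of isolated critical points $y_*$. At each such $y_*$, let $j=j(y_*)\le m$ be the smallest integer with $v^{(j)}(y_*)\ne 0$ and perform the Airy-type rescaling $y-y_* = \ell_j\,z$ with $\ell_j = (\nu/|k|)^{1/(j+2)}\le 1$; the operator then takes the form
\[
 \nu^{j/(j+2)}|k|^{2/(j+2)}\!\left(-\partial_z^2 + i\bigl(\tfrac{v^{(j)}(y_*)}{j!}z^j + O(\ell_j z^{j+1})\bigr)\right),
\]
and a standard Airy-profile argument (localized test function plus integration by parts) yields $\|(H_k-i\lambda)\phi\|\gtrsim \nu^{j/(j+2)}|k|^{2/(j+2)}\|\phi\|$ for $\phi$ supported near $y_*$. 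Since $j\le m$ and $\nu/|k|\le 1$, each local rate dominates $\lambda_{\nu,k}=\nu^{m/(m+2)}|k|^{2/(m+2)}$. A smooth partition of unity gluing these local estimates to a "far" region where $|M|\gtrsim\lambda_{\nu,k}$ (so the imaginary part alone is coercive)—with cutoff commutators absorbed by the dissipation $\nu\|\phi'\|^2$—produces the desired global bound.

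For the low-frequency regime $|k|\le\nu$, the natural length scale is $L=\nu/|k|\ge 1$. Introducing $y=Lz$, the operator becomes
\[
 H_k = \tfrac{k^2}{\nu}\Bigl(-\partial_z^2 + iV(z)\Bigr),\qquad V(z) := \mathrm{sgn}(k)\,\tfrac{\nu}{|k|}\,v(Lz),
\]
with $V'(z) = \mathrm{sgn}(k)(\nu/|k|)^2\,v'(Lz)$, which by condition (2) satisfies $|V'(z)|\ge c_0(\nu/|k|)^2\ge c_0$ for all $|z|$ larger than a $k,\nu$-independent constant. Thus in rescaled variables $V$ is essentially an affine function at infinity, and an Almog/hypocoercive resolvent estimate for $-\partial_z^2+iV$ on the rescaled domain (with the induced Neumann conditions) gives $\|(-\partial_z^2+iV-is)^{-1}\|\le C$ uniformly in $s\in\mathbb R$. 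Undoing the rescaling recovers $\|(H_k-i\lambda)^{-1}\|\le C\nu/k^2$ uniformly in $\lambda$, which matches $\lambda_{\nu,k}^{-1}=\nu/k^2$.

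The principal obstacle is the handling of infinity: unlike the bounded cross-section case of Coti~Zelati--Gallay~\cite{CG23}, the pure diffusion $-\nu\partial_y^2$ has no spectral gap on an unbounded $\Omega$, and for certain $\lambda$ the zero set of $M=kv-\lambda$ could in principle escape to infinity. Condition (2) is what rules out this pathology: it guarantees $|M|\to\infty$ linearly, confines the critical points to a bounded (though $\lambda$-dependent) region where the local Airy analysis applies, and in the low-frequency regime secures the asymptotically linear behaviour of $V$ needed for the Almog-type resolvent bound. The delicate technical point is to make the far-field estimate, the cutoffs, and the local rescalings all quantitatively compatible in a way that is uniform in $\nu$, $k$, and $\lambda$—particularly when the length scale $L=\nu/|k|$ is very large and one must rule out "trapped" low modes of the complex operator $-\partial_z^2+iV$ by exploiting the behaviour of $v$ only at infinity.
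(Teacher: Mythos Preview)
Your reduction to a pseudospectral bound via Gearhart--Pr\"uss matches the paper. The resolvent estimate, however, is obtained quite differently there. The paper uses a single global decomposition valid in \emph{both} regimes: for a small parameter $\delta$, split $\Omega$ into the thickened level set $\mathcal{E}=\{y:\operatorname{dist}(y,\{|v-\lambda|<\delta^m\})<\delta\}$ and its complement; a sign-adapted cutoff $\chi$ (borrowed from \cite{CG23}) extracts $|k|\,\delta^m\|g\|_{L^2(\Omega\setminus\mathcal{E})}^2$ from $\Im\langle Hg,\chi g\rangle$, while on $\mathcal{E}$ one uses $\|g\|_{L^2(\mathcal{E})}^2\le m(\mathcal{E})\|g\|_{L^\infty}^2\le 2\,m(\mathcal{E})\,\|g'\|\|g\|$. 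The entire analytic burden becomes the \emph{measure estimate} $m(\mathcal{E})\le C\delta$ uniformly in $\lambda$, which is where conditions (1)--(2) enter (via a finite-covering argument on the range of $v$ for the compact part, and the lower bound on $|v'|$ at infinity for the tails). One then sets $\delta\sim(\nu/|k|)^{1/(m+2)}$ when $\nu\le|k|$ and $\delta=\delta_0$ fixed when $|k|\le\nu$, and both rates drop out of the same inequality.

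Your high-frequency sketch has a genuine uniformity gap. You attach $j=j(y_*)$ to each zero $y_*$ of $v-\lambda/k$, but as $\lambda$ varies the zeros move; near a point $\bar y$ with $v'(\bar y)=0$ and $v''(\bar y)\ne 0$, the nearby zeros have $j=1$ with $|v'(y_*)|\to 0$, so your local Airy constant degenerates. Repairing this forces you to freeze the covering independently of $\lambda$ and use the higher-order scaling on a full neighbourhood of $\bar y$---which is precisely the content of the paper's compactness step. Your low-frequency argument is weaker still: after rescaling, $V(z)=\mathrm{sgn}(k)\,L\,v(Lz)$ depends on $L=\nu/|k|\in[1,\infty)$, and an ``Almog-type'' bound would require $|V'|\ge c$ globally, which fails wherever $v'$ vanishes. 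On the interior region $|z|\lesssim 1$ the full structure of $v$ re-enters and you are thrown back to the original problem with $L$-dependent constants; invoking a name is not a substitute for the estimate. The paper avoids rescaling here altogether: with $\delta=\delta_0$ fixed and $m(\mathcal{E})\le C\delta_0$ merely bounded, the three terms in the basic inequality already balance to give $k^2/\nu$.
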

The case of compact regions has already been treated in detail in \cite{CG23}.
However, for non-compact regions, the non-degeneracy condition at infinity \eqref{non-degenerate at infinity} is required in order to ensure that $\|g(k,t)\|_{L^2(\Omega)}$ admits an exponential decay of \eqref{lamnuk}. In fact, if $v(y)$ does not satisfy the non-degeneracy condition at infinity \eqref{non-degenerate at infinity}, then we can provide the following counterexamples showing that the linearized system \eqref{eq:geq} does not exhibit exponential decay of the form \eqref{lamnuk}.

In our previous work \cite{LZZ-25}, we have shown that if $v(y) = 1/y^{2}$ (a flow commonly referred to by physicists as the Taylor–Couette flow) with $\Omega = [1,\infty)$, then the linear system \eqref{eq:geq} cannot exhibit exponential decay, even in a weighted sense (see \cite{LZZ-25} for details). For this choice of $v(y)$, we have $v^{(m)}(y) > 0$ for all $m \in \mathbb{N}^*$, while $\lim_{y \to \infty} v'(y) = 0$. This example demonstrates that the non-degeneracy condition \eqref{non-degenerate at infinity} is indeed necessary for obtaining exponential decay.

Beyond the one-dimensional case, we also extend the results to higher dimensions.

\begin{theorem}[Higher-dimensional case]\label{thm:higher-d}
Let $d\ge 2$ and $\Omega=\prod_{j=1}^d \Omega_j$, where for each $j\in\{1,\ldots,d\}$ the set
$\Omega_j$ is one of
\[
  (-\infty,\infty),\qquad (-\infty,L_{1,j}),\qquad (L_{2,j},\infty),\qquad (L_{3,j},L_{4,j}),
\]
with arbitrary numbers $L_{1,j},L_{2,j},L_{3,j},L_{4,j}\in\mathbb{R}$ and $L_{3,j}<L_{4,j}$.
Assume the profile is separable:
\[
  v(y)=v_1(y_1)+v_2(y_2)+\cdots+v_d(y_d),
\]
where $y=(y_1,y_2,\ldots,y_d).$ Moreover, for each $j\in\{1,\ldots,d\}$ assume:
\begin{itemize}
  \item[(i)] ($m_j$-th order non-degeneracy) There exists $m_j\in\mathbb{N}^*$ with
  $v_j\in C^{m_j}(\overline{\Omega}_j)$ and
  \[
    \forall\,y_j\in\overline{\Omega}_j,\quad
    |v_j'(y_j)|+|v_j''(y_j)|+\cdots+|v_j^{(m_j)}(y_j)|>0.
  \]
  \item[(ii)] (Non-degeneracy at infinity) If $\Omega_j$ is of the form
  $(-\infty,\infty)$, $(-\infty,L_{1,j})$, or $(L_{2,j},\infty)$, then there exists $c_j>0$ such that
  \[
    \varliminf_{|y_j|\to\infty} |v_j'(y_j)| \;\ge\; c_j.
  \]
\end{itemize}
Then there exist constants $C,c>0$ such that, for any $t\ge 0$,
\[
  \|g(k,t)\|_{L^2(\Omega)}
  \;\le\;
  C\,\exp\Bigl(-\,c\Bigl(\sum_{j=1}^d \lambda_{\nu,k,j}\Bigr)t\Bigr)\,\|g_0\|_{L^2(\Omega)},
\]
where, for each $j\in\{1,\ldots,d\}$,
\[
  \lambda_{\nu,k,j} \,=\,
  \begin{cases}
    \nu^{\frac{m_j}{m_j+2}}\,|k|^{\frac{2}{m_j+2}}, & \text{if } 0<\nu \le |k|,\\[1ex]
    \dfrac{k^2}{\nu}, & \text{if } 0< |k| \le \nu.
  \end{cases}
\]
\end{theorem}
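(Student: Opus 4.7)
The plan is to exploit the separable structure of $v$ to decompose the evolution into $d$ commuting one-dimensional problems, each of which is controlled either by Theorem \ref{thm:main1} (when $\Omega_j$ is unbounded) or by \cite{CG23} (when $\Omega_j$ is bounded).

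First, I would write the generator of \eqref{eq:geq} as $L = \mathrm{i}k\,v(y) - \nu\Delta_y = \sum_{j=1}^d L_j$, where
$$L_j \;:=\; \mathrm{i}k\,v_j(y_j) \;-\; \nu\,\partial_{y_j}^2$$
acts only on the $j$-th coordinate (with homogeneous Neumann conditions on $\partial\Omega_j$). Since distinct $L_j$ act on disjoint variables, they commute pairwise on a common core of smooth functions, and the associated contraction semigroup factorizes as $\mathrm{e}^{-tL} = \prod_{j=1}^d \mathrm{e}^{-tL_j}$, with the order of the factors being irrelevant. Equivalently, one may verify this factorization directly by writing $g(k,t) = \mathrm{e}^{-tL}g_0$ and checking that $\prod_j \mathrm{e}^{-tL_j}g_0$ solves \eqref{eq:geq} with the correct initial datum, then appealing to uniqueness.

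Second, I would apply the one-dimensional theory to each factor. For each $j$, the operator $L_j$ generates precisely the 1D evolution \eqref{eq:geq} for the profile $v_j$ on $\Omega_j$. When $\Omega_j$ is unbounded the hypotheses (i)-(ii) match those of Theorem \ref{thm:main1}; when $\Omega_j$ is bounded, the same exponential decay bound is furnished by \cite{CG23}. In either case one obtains constants $C_j,c_j>0$ uniform in $(\nu,k)$ such that, for every $h\in L^2(\Omega_j)$,
$$\bigl\|\mathrm{e}^{-tL_j}h\bigr\|_{L^2(\Omega_j)} \;\le\; C_j\,\mathrm{e}^{-c_j\,\lambda_{\nu,k,j}\,t}\,\|h\|_{L^2(\Omega_j)}.$$
Using Fubini, together with the isometric identification $L^2(\Omega)\cong L^2\bigl(\prod_{i\neq j}\Omega_i;\,L^2(\Omega_j)\bigr)$ and the fact that $\mathrm{e}^{-tL_j}$ acts trivially in the other coordinates, this 1D bound lifts to
$$\bigl\|\mathrm{e}^{-tL_j}h\bigr\|_{L^2(\Omega)} \;\le\; C_j\,\mathrm{e}^{-c_j\,\lambda_{\nu,k,j}\,t}\,\|h\|_{L^2(\Omega)},\qquad h\in L^2(\Omega).$$

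Third, I would compose the bounds. Applying each $\mathrm{e}^{-tL_j}$ in succession, the semigroup factorization yields
$$\bigl\|g(k,t)\bigr\|_{L^2(\Omega)} \;\le\; \Bigl(\prod_{j=1}^d C_j\Bigr)\,\exp\Bigl(-\sum_{j=1}^d c_j\,\lambda_{\nu,k,j}\,t\Bigr)\,\|g_0\|_{L^2(\Omega)} \;\le\; C\,\exp\Bigl(-c\sum_{j=1}^d \lambda_{\nu,k,j}\,t\Bigr)\,\|g_0\|_{L^2(\Omega)},$$
with $C=\prod_j C_j$ and $c=\min_j c_j$. The main (and essentially only) conceptual obstacle is the rigorous justification of the commutation and semigroup factorization on a product domain with mixed bounded/unbounded factors, and verifying that the lifting from $L^2(\Omega_j)$ to $L^2(\Omega)$ via Fubini is legitimate for the partial semigroup $\mathrm{e}^{-tL_j}$; this is standard but deserves a careful check. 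Once this framework is in place, Theorem \ref{thm:higher-d} follows as an immediate consequence of Theorem \ref{thm:main1} and \cite{CG23}.
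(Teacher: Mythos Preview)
Your proposal is correct and follows essentially the same approach as the paper: decompose the generator as a sum of commuting one-dimensional operators $H_{\nu,k,j}=-\nu\partial_{y_j}^2+\mathrm{i}kv_j(y_j)$, apply Theorem~\ref{thm:main1} (or \cite{CG23} in the bounded case) to each factor, and iterate the resulting bounds coordinate by coordinate via Fubini to obtain $C=\prod_j C_j$ and $c=\min_j c_j$. The paper's version is slightly more explicit in writing out the Fubini iteration as a chain of inequalities on the squared norm, but the argument is the same.
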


It is worth noting that in \cite{CG23}, the authors also discussed the higher-dimensional case where the cross-section $\Omega$ is a bounded domain. They mainly focused on the situation that $v(y)$ is a smooth Morse function without critical points on the boundary $\partial\Omega$. This means that $v(y)$ has finitely many critical points, all of which are nondegenerate, and that in a neighborhood of each critical point, $v(y)$ is locally diffeomorphic to a quadratic polynomial.  The finiteness of the critical points plays a crucial role in their approach, and therefore the method cannot be extended to unbounded domains, where this finiteness condition fails.

The main approach of this paper is to use resolvent estimates to obtain our main results. Recall that $\Omega \subset \mathbb{R}^d$ is a smooth domain, and that $v : \overline{\Omega} \to \mathbb{R}$ is a smooth function.
For any $\nu > 0$ and any $k \neq 0$, the linear evolution equation \eqref{eq:geq} can be written in the abstract form
\begin{equation}\label{eq:Hdef}
\partial_t g + H_{\nu,k} g = 0,
\qquad \text{where} \qquad
H_{\nu,k} = -\nu \Delta_y + \mathrm{i}k v(y).
\end{equation}
We regard $H_{\nu,k}$ as a linear operator on the Hilbert space $X = L^2(\Omega)$ with domain
$$
  D(H_{\nu,k}) \,=\, 
  \Bigl\{ g \in H^2(\Omega)\,;\, \mathcal{N} \cdot \nabla g = 0 
  \;\text{on } \partial \Omega \Bigr\},
$$
where $\mathcal{N}$ denotes the outward unit normal vector on $\partial\Omega$. Moreover, for all $g \in D(H_{\nu,k})$, one has the identities
\begin{equation}\label{numrange}
  \Re\,\langle H_{\nu,k} g, g\rangle \,=\, \nu \|\nabla_y g\|^2 \,\ge\, 0\,,
  \qquad \text{and}\qquad 
  \Im\,\langle H_{\nu,k} g, g\rangle \,=\, k \int_\Omega v(y)|g(y)|^2 \,\mathrm{d}y\,.
\end{equation}
Here and throughout, we denote by $\langle\cdot,\cdot\rangle$ the inner product on $X=L^2(\Omega)$, and by $\|\cdot\|$ the associated norm.

Our main objective is to obtain sharp estimates for the resolvent norm $\|(H_{\nu,k} - z)^{-1}\|_{X\to X}$ uniformly for all $z \in \mathrm{i}\mathbb{R}$. 
Equivalently, this amounts to deriving a precise lower bound on the pseudospectral 
abscissa $\Psi(H_{\nu,k})$ introduced below.
\begin{equation}\label{eq:Psidef}
  \Psi(H_{\nu,k}) \,:\,=\, 
  \biggl(\sup_{z \in \mathrm{i}\mathbb{R}} \|(H_{\nu,k} - z)^{-1} \|_{X\to X}\biggr)^{-1},
\end{equation}
as a function of the parameters $\nu$ and $k$. 

This quantity is first introduced and studied by Gallagher, Gallay and Nier in \cite{GGN}. 
A recent result of Wei \cite{W21} (see also Helffer and Sj\"ostrand \cite{HS21}) 
shows that the pseudospectral abscissa $\Psi(H_{\nu,k})$ provides a decay estimate for the 
semigroup generated by $-H_{\nu,k}$. In particular, by applying \cite[Theorem~1.3]{W21}, 
we obtain
\begin{lemma}[\cite{W21}]\label{GP lemma}
Let $H_{\nu,k}$ be a $m$-accretive operator in a Hilbert space $X$, then the following estimates hold:
\begin{equation}
  \, \| \mathrm{e}^{-tH_{\nu,k}} \|_{X\to X}
   \,\leq\, \mathrm{e}^{-t\Psi(H_{\nu,k}) + \tfrac{\pi}{2}}, 
   \qquad \text{for all } t \geq 0.
\end{equation}
\end{lemma}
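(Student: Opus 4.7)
\textbf{Proof plan for Lemma~\ref{GP lemma}.} The statement is the Gearhart--Pr\"uss--Wei semigroup bound for $m$-accretive operators, so my plan follows the cutoff/Fourier/resolvent scheme introduced by Wei. The first observation is that the $m$-accretivity of $H_{\nu,k}$ yields, via Lumer--Phillips, the contraction bound $\|e^{-tH_{\nu,k}}\|_{X\to X}\le 1$ for every $t\ge 0$; equivalently, $t\mapsto \|e^{-tH_{\nu,k}}f\|$ is non-increasing for each $f\in X$. In particular the desired inequality is trivial whenever $t\,\Psi(H_{\nu,k})\le \pi/2$, so the real task is to produce exponential decay with the sharp rate $\Psi(H_{\nu,k})$ for large $t$.

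The central step would be an $L^2$-in-time estimate obtained via a cutoff. Fix $T>0$, $f\in X$, set $u(t)=e^{-tH_{\nu,k}}f$ (extended by zero for $t<0$), and introduce a smooth cutoff $\chi:\mathbb{R}\to[0,1]$ with $\chi\equiv 1$ on $[0,T]$ and supported in $[-a,T+a]$ for some $a>0$. Then $v(t):=\chi(t)u(t)$ satisfies
\[
  (\partial_t+H_{\nu,k})v(t) \,=\, \chi'(t)\,u(t), \qquad t\in\mathbb{R},
\]
with compactly supported source. Taking the Fourier transform in $t$ gives $(\mathrm{i}\lambda+H_{\nu,k})\hat v(\lambda)=\widehat{\chi' u}(\lambda)$, and the bound $\|(H_{\nu,k}-\mathrm{i}\lambda)^{-1}\|_{X\to X}\le \Psi(H_{\nu,k})^{-1}$ built into the definition \eqref{eq:Psidef}, combined with Plancherel in $t$ and the contraction estimate $\|u(t)\|\le\|f\|$, yields
\[
  \int_{\mathbb{R}}\|v(t)\|^2\,\mathrm{d}t
  \,\le\, \Psi(H_{\nu,k})^{-2}\,\|\chi'\|_{L^2(\mathbb{R})}^2\,\|f\|^2.
\]
Monotonicity of $\|u(t)\|$ on $[0,T]$ then converts this into a pointwise bound $\|u(T)\|^2\, T\le \Psi(H_{\nu,k})^{-2}\|\chi'\|_{L^2(\mathbb{R})}^2\|f\|^2$, and choosing $a$ appropriately while iterating on time intervals of length $\sim 1/\Psi(H_{\nu,k})$ already gives an exponential bound.

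The main obstacle is extracting the sharp additive constant $\pi/2$ in the exponent rather than a generic exponential. A crude iteration of the $L^2$-to-$L^\infty$ step above loses a constant factor in the rate. Wei's refinement is to work with the shifted family $H_{\nu,k}-\mu$ for $\mu<\Psi(H_{\nu,k})$, whose imaginary-axis resolvent is bounded by $1/(\Psi(H_{\nu,k})-\mu)$ thanks to the Neumann series
\[
  (H_{\nu,k}-\mu-\mathrm{i}\lambda)^{-1}=(H_{\nu,k}-\mathrm{i}\lambda)^{-1}\bigl(I-\mu(H_{\nu,k}-\mathrm{i}\lambda)^{-1}\bigr)^{-1},
\]
and then to apply the cutoff--Fourier argument to $e^{\mu t}u(t)$ using a trigonometric cutoff that realizes the sharp Wirtinger-type constant relating $\|\chi'\|_{L^2}$ to $\|\chi\|_{L^2}$ on an interval of prescribed length. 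Optimizing jointly in $\mu$ and in the cutoff shape, and letting $\mu\uparrow \Psi(H_{\nu,k})$, produces exactly the additive constant $\pi/2$ in the exponent and thereby the stated estimate. This final optimization is the delicate part of the argument; the remaining pieces are essentially soft consequences of $m$-accretivity, Plancherel, and the definition of $\Psi(H_{\nu,k})$.
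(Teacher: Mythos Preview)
The paper does not prove Lemma~\ref{GP lemma}; it is simply quoted from \cite{W21} (with a pointer also to \cite{HS21}), so there is no ``paper's own proof'' to compare against. Your sketch is a faithful outline of Wei's argument in \cite{W21}: the cutoff--Fourier--resolvent--Plancherel machine, combined with the shift $H_{\nu,k}\mapsto H_{\nu,k}-\mu$ for $\mu<\Psi(H_{\nu,k})$ and an optimized (trigonometric) cutoff to capture the sharp additive constant $\pi/2$. So in spirit you are reproducing the cited reference rather than offering an alternative route.

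One small technical point worth tightening: with $u$ extended by zero for $t<0$ and a cutoff $\chi$ satisfying $\chi\equiv 1$ on $[0,T]$ (hence $\chi(0)=1$), the identity $(\partial_t+H_{\nu,k})(\chi u)=\chi' u$ is not quite correct in the sense of distributions---there is an extra Dirac contribution $\chi(0)\,\delta_0\otimes f$ coming from the jump of $u$ at $t=0$. In Wei's actual proof the cutoff is taken to vanish at $t=0$ (e.g.\ $\chi\equiv 1$ on $[a,T]$ with $\chi(0)=0$), which removes this term; the monotonicity of $t\mapsto\|u(t)\|$ then still allows one to control $\|u(T)\|$ from the $L^2$-in-time bound. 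This is a cosmetic fix and does not affect the structure of your plan.
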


It is worth noting that Lemma \ref{GP lemma} provides a very effective method to characterize the long-time decay behavior of linear evolution equations. However, one should also keep in mind that the optimal pseudospectral estimate is not necessarily equivalent to the optimal decay estimate for the evolution equation.    

For instance, in \cite{LWZ20a}, it has been proved that for the linearized Oseen vortex operator $L_\alpha$, the pseudospectral bound $ \Psi(L_\alpha)$ is $|\alpha|^{\f13}(|\alpha|\gg1)$, while the spectral bound $\Phi(L_\alpha)$ is $|\alpha|^{\f12}$, that is
\begin{align*}
C_1|\alpha|^{\f13}\leq\Psi(L_\alpha)\le C_2|\alpha|^{\f13},\quad \Phi(L_\alpha):=\inf_{\lambda\in\sigma(L_\alpha)}\Re\lambda\geq C|\alpha|^{\f12},
\end{align*}
here $C, C_1, C_2$ denote positive constants independent of $\alpha$, and $\sigma(L_\alpha)$ denotes the spectrum of the operator $L_\alpha$. Consequently, from the pseudospectral and spectral bounds one can deduce 
\begin{align}
\label{decay rate1}   
&    \| \mathrm{e}^{-tL_\alpha} \|_{X\to X}
   \,\leq\, C\mathrm{e}^{-c|\alpha|^{\f13}t},\\
\label{decay rate2}   &  \varlimsup_{t\to\infty} \f{1}{t}\log\| \mathrm{e}^{-tL_\alpha} \|_{X\to X}
   \,\leq\, -c|\alpha|^{\f12},
\end{align}
here $C, c$ denote positive constants independent of $\alpha$ and the space $X$ refers to the weighted $L^2$ space with Gaussian weight, defined after performing the self-similar transformation. In our recent work \cite{LZZ-Prep}, we also obtain that the lower bound of estimate \eqref{decay rate2}  is $-|\alpha|^{\f12}$, namely:
\begin{align*}    \varlimsup_{t\to\infty} \f{1}{t}\log\| \mathrm{e}^{-tL_\alpha} \|_{X\to X}
   \,\geq\, -c’|\alpha|^{\f12}.
\end{align*}
It is worth noting that \eqref{decay rate2} only provides a decay estimate of the form 
\begin{align}
   \label{decay rate3}  \| \mathrm{e}^{-tL_\alpha} \|_{X\to X}
   \,\leq\, \tilde{C}(\alpha)\mathrm{e}^{-c|\alpha|^{\f12}t},
\end{align}
where $\tilde{C}$ depends on $\alpha$ and cannot be given in an explicit expression. Moreover, the fact that $\tilde C$ cannot be chosen independently of $\alpha$ is due to the non-selfadjointness of $L_\alpha$.

But it must be said that Lemma \ref{GP lemma} provides a very clean and convenient way to transform semigroup decay estimates into resolvent estimates for the operator. Therefore, the key computation in this section is to obtain an estimate for the pseudospectral bound $\Psi(H_{\nu,k})$. Hence, the proof of Theorem \ref{thm:main1} reduces to establishing the following proposition.

\begin{proposition}\label{Enhanced dissipation1d}Assume $d=1$ and $v : \Omega\to \mathbb{R}$ is a $C^{m}$ function, where $m$ and $\Omega$ are the same as in Theorem \ref{thm:main1}, and $v$ satisfies conditions \eqref{eq:vder} and \eqref{non-degenerate at infinity} of Theorem \ref{thm:main1}. Then
there exists a constant $C > 0$ such that, for all $\nu > 0$ and all $k \neq 0$,
\begin{equation}\label{eq:resol main}
  \Psi(H_{\nu,k})  \,\ge\, C\,\lambda_{\nu,k}\,,
\end{equation}
where $\Psi(H_{\nu,k}) $ is defined in \eqref{eq:Psidef}. 
\end{proposition}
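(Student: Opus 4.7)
The goal is the resolvent lower bound $\|(H_{\nu,k}-\mathrm{i}\lambda)g\|\ge C\lambda_{\nu,k}\|g\|$ uniformly in $\lambda\in\mathbb{R}$ and $g\in D(H_{\nu,k})$. Writing $f:=(H_{\nu,k}-\mathrm{i}\lambda)g$ and $c:=\lambda/k$, the identities \eqref{numrange} give $\nu\|g'\|^2=\Re\langle f,g\rangle\le\|f\|\|g\|$ for free, together with $k\int_\Omega(v-c)|g|^2\,\mathrm{d}y=\Im\langle f,g\rangle$. The crux is to turn the second identity, combined with the non-degeneracy of $v$, into a quantitative lower bound on $\|g\|$.

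\textbf{Enhanced dissipation, $\nu\le|k|$.} Introduce the balanced scale $\delta:=(\nu/|k|)^{1/(m+2)}$, for which $\nu/\delta^2=|k|\delta^m=\lambda_{\nu,k}$. The main geometric step is a Whitney-type decomposition of $\Omega$ into intervals $\{I_\alpha\}$ of length $\sim\delta$, on each of which some derivative $v^{(j_\alpha)}$ with $1\le j_\alpha\le m$ is bounded below by a positive constant depending only on $v$. On a sufficiently large compact set $K\Subset\Omega$ this follows from \eqref{eq:vder} as in \cite{CG23}; on $\Omega\setminus K$, the non-degeneracy at infinity \eqref{non-degenerate at infinity} allows one to take $j_\alpha=1$ uniformly. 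On each $I_\alpha$ a change of variable normalises $v-c$ to a monomial of degree $j_\alpha$; testing $f=(H_{\nu,k}-\mathrm{i}\lambda)g$ against $(v-c)g$ and combining with $\nu\|g'\|^2\le\|f\|\|g\|$ and the monomial interpolation
\[
\|g\|_{L^2(I_\alpha)}^2\le C\bigl(\delta^{-2j_\alpha}\|(v-c)g\|_{L^2(I_\alpha)}^2+\delta^2\|g'\|_{L^2(I_\alpha)}^2\bigr)
\]
yields a local estimate of order $\lambda_{\nu,k}$. A finite-overlap partition of unity subordinate to $\{I_\alpha\}$ globalises it, the commutators $[H_{\nu,k},\chi_\alpha]$ being $O(\nu/\delta^2)=O(\lambda_{\nu,k})$ and hence absorbable on the right-hand side.

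\textbf{Taylor dispersion, $|k|\le\nu$.} Here $\lambda_{\nu,k}=k^2/\nu\le|k|$ and $\delta\gtrsim 1$, so the covering degenerates to a single macroscopic piece. The hypothesis \eqref{non-degenerate at infinity} still provides $|v'|\ge c_0/2$ on an unbounded tail, allowing a global change of variable $s=v(y)$ there. Testing $f$ against $(v-c)g$, integrating by parts, and invoking $\nu\|g'\|^2\le\|f\|\|g\|$ leads to $k^2\|(v-c)g\|^2\lesssim\|f\|\|g\|+\nu\|g'\|\|g\|$, which, combined with a Hardy-type inequality in the new variable, yields $\|g\|^2\lesssim(\nu/k^2)\|f\|\|g\|$, i.e.\ the claimed rate.

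\textbf{Main obstacle.} The essential difficulty relative to \cite{CG23} is the uniform Whitney covering step in the enhanced-dissipation regime. On a bounded cross section it follows from finite-cover compactness; for an unbounded $\Omega$ this fails, and \eqref{non-degenerate at infinity} is exactly the substitute, guaranteeing first-order non-degeneracy with uniform constants throughout the tail. The Taylor--Couette counterexample of \cite{LZZ-25} mentioned in the introduction confirms that this hypothesis cannot be dropped, so the covering argument at infinity is both the key and the genuinely new step of the proof.
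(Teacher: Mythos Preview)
Your route differs substantially from the paper's. The paper does not localise via a Whitney covering or perform any change of variables. It argues globally with a single \emph{bounded} multiplier: testing $Hg:=(H_{\nu,k}-\mathrm{i}k\lambda)g$ against $\chi g$, where $\chi$ is a Lipschitz regularisation of $\operatorname{sign}(v-\lambda)$ with $\|\chi\|_\infty\le1$ and $\|\chi'\|_\infty\le1/\delta$, yields coercivity of $|k|\delta^m\int_{\Omega\setminus\mathcal{E}}|g|^2$; on the small set $\mathcal{E}=\mathcal{E}^m_{\lambda,\delta}$ one simply uses $\int_{\mathcal{E}}|g|^2\le m(\mathcal{E})\|g\|_{L^\infty}^2\le 2\,m(\mathcal{E})\|g'\|\|g\|$ together with $\nu\|g'\|^2\le\|Hg\|\|g\|$. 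Optimising $\delta$ gives the claim \emph{provided} $m(\mathcal{E}^m_{\lambda,\delta})\le C\delta$ uniformly in $\lambda$, and this sublevel-set measure estimate (Proposition~\ref{mathcal e measure-prop}, proved in Sections~\ref{sec2}--\ref{sec3}) is where essentially all the work lies; the non-degeneracy at infinity \eqref{non-degenerate at infinity} enters only there, to control the tail contribution to $m(\mathcal{E})$. The Taylor-dispersion regime $|k|\le\nu$ is handled by the \emph{same} computation with $\delta=\delta_0$ fixed, not by a separate Hardy-type argument.

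Two steps in your outline would not go through as written. First, having $|v^{(j_\alpha)}|\ge c$ on $I_\alpha$ does \emph{not} allow you to ``normalise $v-c$ to a monomial of degree $j_\alpha$'' by a change of variable: the lower-order derivatives of $v$ need not vanish on $I_\alpha$, and since your covering is chosen independently of $c$ there is no reason for the local picture to be a pure power. What that hypothesis genuinely yields is a sublevel-set bound $m\{y\in I_\alpha:|v(y)-c|<\sigma\}\lesssim\sigma^{1/j_\alpha}$, which is exactly the paper's mechanism in disguise. Second, testing against the \emph{unbounded} multiplier $(v-c)g$ is delicate here: condition \eqref{non-degenerate at infinity} forces $|v|\to\infty$, so after integration by parts you produce a term $\nu\langle g',v'g\rangle$ with $v'$ possibly unbounded (e.g.\ $v(y)=y^2$ satisfies all the hypotheses of Theorem~\ref{thm:main1}), and neither $\|(v-c)g\|$ nor $\|v'g\|$ is a priori controlled by $\|g\|$. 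The paper sidesteps this entirely by using the bounded multiplier $\chi$.
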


To prove the lower bound \eqref{eq:resol main}, we first need to introduce some notation. Write any $z\in \mathrm{i}\mathbb{R}$ as $z=\mathrm{i} k\,\lambda$ with $\lambda\in\mathbb{R}$,  then
\begin{equation}
H_{\nu,k}-z \;=\; H_{\nu,k,\lambda} \;:=\; -\nu\,\Delta_y \;+\; \mathrm{i} k\bigl(v(y)-\lambda\bigr).\label{eq:Hdef2}
\end{equation}
The derivation of the resolvent estimate \eqref{eq:resol main} relies essentially on the finiteness of the following level sets. Let
\begin{equation}
 \label{level set-1} E_\lambda := \bigl\{ y \in \Omega \;\big|\; v(y) = \lambda \bigr\}, 
  \qquad \forall\,\lambda \in \mathbb{R}.
\end{equation}
This is precisely why we need to discuss the finiteness of the level set: because if $y \in \{y\in\Omega:|v(y)-\lambda|\geq\delta^m\}$, then in a certain sense, the imaginary part $\Im< H_{\nu,k,\lambda}g,g>$ provides a coercive estimate of the form $\delta^m\int_{\{y\in\Omega:|v(y)-\lambda|\geq\delta^m\}}|g|^2dy$. However, for the integral over $
\{ y \in \Omega : |v(y) - \lambda| <\delta^m \}$, we can only hope to control it by using the dissipative term $-\nu\,\partial_y^2$. Moreover, the measure of $\{ y \in \Omega : |v(y) - \lambda| <\delta^m \}$ should not be too large. 

Thus, without loss of generality, we define the so-called thickened level set as follows, following the notation in \cite{CG23}:
\begin{equation}\label{eq:Edef}
  E_{\lambda,\delta}^m :\,=\, \bigl\{y \in \Omega \,\big|\, |v(y) - \lambda| 
  < \delta^m\bigr\}\,,
\end{equation}
the corresponding $\delta$-neighborhood of $E_{\lambda,\delta}^m$ is defined by
\begin{equation}\label{eq:Ndelta}
   \mathcal{E}_{\lambda,\delta}^m
  :\,= \,\bigl\{ y \in \Omega \,\big|\, \operatorname{dist}(y, E_{\lambda,\delta}^m ) < \delta \bigr\}.
\end{equation}

In \cite{CG23}, for the higher-dimensional case, the authors also directly treated neighborhoods of this type around the set $\mathcal{E}_{\lambda,\delta}^m$. 
However, in higher dimensions, such a definition of distance may be effective only when $v(y)$ is a homogeneous polynomial, for example, when $v(y)$ is a Morse function.

In fact, the approach of decomposing the domain into $\mathcal{E}_{\lambda,\delta}^m$ and $\Omega\setminus\mathcal{E}_{\lambda,\delta}^m$ to derive resolvent estimates \eqref{eq:resol main} can already be found in \cite{AHL-2,CDLZ25,CLWZ-2D-C,CWZ24,DL22,GGN,LWZ20} and subsequent literature.

\begin{proof}[\bf Proof of Proposition \ref{Enhanced dissipation1d}]

For notational convenience, we set $H:=H_{\nu,k,\lambda}$, $E:=E_{\lambda,\delta}^m$ and $ \mathcal{E}:=\mathcal{E}_{\lambda,\delta}^m$. Since our goal is to use the imaginary part $\Im< Hg,g>$ to derive the coercive estimate $\delta^m\int_{\Omega\setminus\mathcal{E}}|g|^2dy$ in $\Omega\setminus\mathcal{E}$\ forall $g\in D(H_{\nu,k})$ and $\delta\in(0,\delta_0],$ where $\delta_0$ is an fixed number less than one to be determined later, and since $v(y)-\lambda$ may change sign, it is natural to introduce a cutoff function $\chi$ such that $\chi\bigl(v(y)-\lambda\bigr)=\bigl|v(y)-\lambda\bigr|$ on $\Omega\setminus\mathcal{E}$.

In fact, for different shear flows, such as the Couette flow $v(y)=y$ \cite{CLWZ-2D-C,CWZ24}, the Kolmogorov flow $v(y)=\sin y$ \cite{LWZ20}, and the Poiseuille flow $v(y)=y^{2}$ \cite{CDLZ25,DL22}, different cutoff functions $\chi$ have been devised so that $\chi\bigl(v(y)-\lambda\bigr)=\bigl|v(y)-\lambda\bigr|$ on $\Omega\setminus\mathcal{E}$ holds. 

In \cite{CG23}, a general cutoff function $\chi$ is constructed so that the approach applies to any $C^{m}$ function $v(y)$. For convenience, we also use the cutoff function $\chi$ introduced in \cite{CG23}, which is defined as follows:
\[
  \chi(y) := \phi\!\left(\frac{1}{\delta}\,
  \operatorname{sign}\bigl(v(y)-\lambda\bigr)\,
  \operatorname{dist}\bigl(y,E\bigr)\right), 
  \qquad y \in \Omega,
\]
where $\phi : \mathbb{R} \to [-1,1]$ is the unique odd function satisfying 
$\phi(t) = \min\{t,1\}$ for $t \ge 0$. 
The cutoff function $\chi$ has the following properties:
\begin{itemize}
    \item[(i)] $\|\chi\|_{L^\infty} \le 1$, 
$\|\nabla_y \chi\|_{L^\infty} \le 1/\delta$;
 \item[(ii)] $\chi(y)\,\bigl(v(y)-\lambda\bigr) \ge 0$ for all $y \in \Omega$;
 \item[(iii)] If $y \in \Omega \setminus \mathcal{E}$, then $\chi(y) = \operatorname{sign}\bigl(v(y)-\lambda\bigr)$.
\end{itemize}
A direct calculation yields
$$
\Im \langle Hg,\chi g\rangle \;=\; 
\nu\,\Im\langle \nabla_y g, g \nabla_y\chi\rangle \;+\; k \langle \chi(v-\lambda)g, g\rangle,
$$
since $\|\chi\|_{L^\infty} \le 1$, 
$\|\nabla_y \chi\|_{L^\infty} \le 1/\delta$, it follows that
\begin{equation}\label{imaginary part}
  |k| \langle \chi(v-\lambda)g,g\rangle \,\le\, \|Hg\|\,\|g\| + 
\nu/\delta\,\|\nabla_y g\|\,\|g\|.
\end{equation}
Note that the operator $H$ is accretive, and that $\nabla_y g$ satisfies the following upper bound:
\begin{equation}\label{H is accretive}
  \Re \langle Hg,g\rangle \,=\, \nu \|\nabla_y g\|^2\, \Longrightarrow 
 \nu \|\nabla_y g\|^2 \,\le\, \|Hg\|\,\|g\|\,.
\end{equation}
 Hence, \eqref{H is accretive}, together with \eqref{imaginary part}, implies
\begin{equation*}
\begin{split}
|k|\delta^m \int_{\Omega\setminus \mathcal{E}} |g(y)|^2\,\mathrm{d}y \,&\le\,
  |k|\int_{\Omega\setminus \mathcal{E}} \chi(y)\bigl(v(y)-\lambda\bigr) |g(y)|^2\,\mathrm{d}y \,\le\, 
|k| \langle \chi(v-\lambda)g,g\rangle \\ 
  \,&\le\, \|Hg\|\,\|g\| + 
\nu/\delta\,\|\nabla_y g\|\,\|g\|
 \,\le\, 
\|Hg\|\,\|g\| + \nu^{1/2}/\delta\,\|Hg\|^{1/2}\,\|g\|^{3/2},
\end{split}
\end{equation*}
and this yields
\begin{equation}
\label{outside}\begin{split}
 \int_{\Omega\setminus \mathcal{E}} |g(y)|^2\,\mathrm{d}y 
 \,\le\, 
|k|^{-1}\delta^{-m}\|Hg\|\,\|g\| + \nu^{1/2}|k|^{-1}\delta^{-m-1}\,\|Hg\|^{1/2}\,\|g\|^{3/2}.
\end{split}
\end{equation}

For the integral over the interior region $\mathcal{E}$, combining with Lemma \ref{gx}, \eqref{H is accretive}, we have:
\begin{equation}
\label{inside}\int_{\mathcal{E}} |g(y)|^2\,\mathrm{d}y \,\leq\, m(\mathcal{E})\,\|g\|_{L^\infty}^2\,\leq\, 2m(\mathcal{E})\,\|\nabla_y g\|\,\|g\|
 \,\le\, 2\nu^{-\f12}m(\mathcal{E})\,\|Hg\|^{1/2}\,\|g\|^{3/2},
\end{equation}
where $m(\mathcal{E})$ denotes the Lebesgue measure of the set $\mathcal{E}$.

Consequently, using \eqref{outside} and \eqref{inside} together, we deduce that
\begin{equation}
\label{full}\|g\|^2 \,\lesssim\, \max\{|k|^{-1}\delta^{-m},\nu|k|^{-2}\delta^{-2m-2},\nu^{-1}m^2(\mathcal{E})\} \,\|Hg\|\,\|g\|.
\end{equation}
Thus, choosing $\delta^{m+2} \sim\nu/|k|$ yields $|k|^{-1}\delta^{-m}\sim\nu|k|^{-2}\delta^{-2m-2}$. Requiring 
\begin{align*}
    |k|^{-1}\delta^{-m}\sim\nu|k|^{-2}\delta^{-2m-2} \sim\nu^{-1}m^2(\mathcal{E})
\end{align*}
further imposes that $m(\mathcal{E})$ should be of order $\delta$. Hence, if the set $\mathcal{E}$ has measure of order $\delta$, it follows from \eqref{full} that
\begin{equation}
  \label{full resolvent}  \nu^{\tfrac{m}{m+2}} |k|^{\tfrac{2}{m+2}}\|g\|\lesssim \|Hg\|.
\end{equation}

However, note that in the subsequent proof, in Lemma~\ref{|E(lambda)|}, Lemma \ref{mathcal{E}(lambda)} and Proposition \ref{mathcal e measure-prop}, the smallness of~$\delta$ is essential.

For the case $0<\nu\leq|k|,$ we choose
    $\delta =\de_0\nu^{\f{1}{m+2}}|k|^{-\f{1}{m+2}}\leq\delta_0$, where $\delta_0$ to be defined in Proposition \ref{mathcal e measure-prop} is a constant independent of $\nu$ and $k$. This means that the gap of {\it enhanced dissipation} type \eqref{full resolvent} only appears for the frequency range $|k| \ge \nu$. That is, 
\begin{equation}
  \label{enhanced dissipation type}  \nu^{\tfrac{m}{m+2}} |k|^{\tfrac{2}{m+2}}\|g\|\lesssim \|Hg\|,\quad \text{ for all } |k|\geq\nu.
\end{equation}

For the case $0<|k| \le \nu$, we may choose $\delta = \delta_0$, and it suffices to show that $m(\mathcal{E})$ is finite. In this case, the lower bound is referred to as {\it{Taylor dispersion}}, namely 
\begin{equation}
  \label{Taylor dispersion}  k^2/\nu\|g\|\lesssim \|Hg\|,\quad \text{ for all } |k|\leq\nu.
\end{equation}

At this point, completing the proof of Proposition \ref{Enhanced dissipation1d} amounts to establishing the estimate 
\begin{align*}
  m(\mathcal{E}) \leq C\delta,
\end{align*}
where 
\begin{equation*}
  \delta \,=\,
  \begin{cases} 
 \delta_0\left(\f{\nu}{|k|}\right)^\f{1}{m+2}, & \text{if }~ 0 < \nu \le |k|\,, \\[1ex]
  \delta_0, & \text{if }~ 0 < |k| \le \nu\,.
  \end{cases}
\end{equation*}
See Proposition \ref{mathcal e measure-prop} for details.
\end{proof}

It should be noted that the proof of Proposition \ref{Enhanced dissipation1d} does not rely on the non-degeneracy assumptions \eqref{eq:vder}, \eqref{non-degenerate at infinity}. These assumptions are introduced to ensure $
m(\mathcal{E}_{\lambda,\delta}^m) \leq C\delta$. For details, see Section~\ref{sec3}.

In fact, for $v(y)$ on a compact region, regardless of whether the non-degeneracy condition \eqref{eq:vder} holds, the dissipativity of $-\nu \Delta_y$ alone is sufficient to guarantee that the linear system \eqref{eq:Hdef} decays exponentially. The role of assumption \eqref{eq:vder} is to determine the precise enhanced dissipation rate $\exp(-c\nu^{\tfrac{m}{m+2}} |k|^{\tfrac{2}{m+2}}t)$ of the system \eqref{eq:Hdef}, rather than merely to ensure exponential decay.

In this paper, however, we consider the long-time behavior of solutions in non-compact regions. As mentioned above, assumption \eqref{non-degenerate at infinity} is imposed to guarantee that the linearized system \eqref{eq:Hdef} still exhibits exponential decay in non-compact regions. In \cite{LZZ-25}, for the Taylor–Couette flow $v(y)=1/y^{2}$ in the exterior region $\Omega = [1,\infty)$, we have already shown that \eqref{eq:Hdef} cannot exhibit exponential decay. In this case, we obtain
 \begin{align}
    m(\mathcal{E}_{0,\delta}^m) = \infty \text{ for any }m\in\mathbb{N}^{*}.  
 \end{align}

This example of the Taylor–Couette flow in the exterior domain illustrates that, for a non-compact domain $\Omega$, the non-degeneracy condition at infinity is not only sufficient to guarantee $m(\mathcal{E}_{\lambda,\delta}^m) \leq C\delta$, but is also a necessary condition for the linear operator to possess a positive pseudospectral gap  $\Psi(H_{\nu,k})$ — since, according to Lemma \ref{GP lemma}, a positive pseudospectral gap directly implies exponential decay. 

\section{Uniform-in-$\lambda$ estimate for $m(\mathcal{E}_{\lambda,\delta}^m)$ in compact regions}\label{sec2}

Prior to the proof of Proposition \ref{mathcal e measure-prop}, we introduce several auxiliary lemmas that will be used in the argument.

\begin{lemma}\label{piecewise strict monotonicity}(Piecewise strict monotonicity)
Let $d=1$, $m \geq 1$, and $L_1, L_2$ are arbitrary real numbers with $L_1 < L_2$. Suppose $v:[L_1,L_2] \to \mathbb{R}$ is a $C^m$ function satisfying the $m$-th order non-degeneracy condition
\[
 |v'(y)| + |v''(y)| + \cdots + |v^{(m)}(y)| > 0,\quad \forall\, y \in [L_1,L_2].
\]
Then $v$ is piecewise strictly monotone; that is, there exist $N \in \mathbb{N}^*$ and points 
\[
y_0,y_1,\ldots,y_N \in [L_1,L_2], \qquad L_1 = y_0 < y_1 < \cdots < y_N = L_2,
\]
such that $v(y)$ is strictly monotone on each interval $[y_{j-1},y_j]$ for $j=1,\ldots,N$, and the monotonicity direction of $v$ alternates between two consecutive intervals.

\end{lemma}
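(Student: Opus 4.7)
The plan is to first show that the zero set $Z := \{y \in [L_1,L_2] : v'(y) = 0\}$ is finite, and then use the sign changes of $v'$ across points of $Z$, together with the endpoints $L_1,L_2$, to construct the required partition $y_0<y_1<\cdots<y_N$.

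For the finiteness of $Z$, I would argue by contradiction via an iterated application of Rolle's theorem. Suppose $Z$ were infinite. By the Bolzano--Weierstrass theorem there exist an accumulation point $y_*\in[L_1,L_2]$ and a sequence $\{z_n\}\subset Z$ of pairwise distinct zeros with $z_n\to y_*$; by continuity of $v'$ this already forces $v'(y_*)=0$. Rolle's theorem applied between consecutive members of $\{z_n\}$ then supplies a sequence of zeros of $v''$ still converging to $y_*$, so $v''(y_*)=0$. Iterating the construction $m-1$ times yields $v'(y_*)=v''(y_*)=\cdots=v^{(m)}(y_*)=0$, contradicting the non-degeneracy condition \eqref{eq:vder}.

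With $|Z|<\infty$ established, the complement $[L_1,L_2]\setminus Z$ is a finite disjoint union of open subintervals on each of which $v'$ has a constant nonzero sign, by continuity and the intermediate value theorem. At every $y\in Z$ the sign of $v'$ on the two adjacent subintervals either agrees or flips. Setting $y_0=L_1$, $y_N=L_2$, and letting $y_1<\cdots<y_{N-1}$ be the points of $Z$ at which $v'$ genuinely changes sign, I obtain closed subintervals $[y_{j-1},y_j]$ on each of which $v'$ has a fixed sign with only isolated zeros in the interior. Consequently $v$ is strictly monotone on each $[y_{j-1},y_j]$, and the monotonicity direction alternates between consecutive subintervals by the very definition of the $y_j$.

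The main obstacle is making the iterated Rolle argument rigorous: at each stage one must extract a \emph{new} sequence of pairwise distinct zeros of the next derivative that still accumulates at the same point $y_*$, and sustain this procedure through $m-1$ steps. A subsidiary subtlety is that a zero of $v'$ need not be a point of sign change, as the example $v(y)=(y-a)^{2j+1}$ shows, so one has to confirm that strict monotonicity is preserved across such degenerate interior zeros of $v'$. This is immediate from the finiteness of $Z$: on any piece on which $v'\ge 0$ (or $v'\le 0$) vanishes only at isolated points, $v$ cannot be constant on a subinterval, since otherwise all derivatives of $v$ would vanish there, contradicting \eqref{eq:vder}; hence $v$ is strictly monotone on the piece.
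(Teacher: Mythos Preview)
Your proof is correct, and it shares with the paper the same crucial mechanism: the iterated Rolle argument showing that an accumulation point of zeros of $v'$ would force $v'(y_*)=\cdots=v^{(m)}(y_*)=0$, violating the non-degeneracy hypothesis. The organization, however, differs. The paper proceeds constructively: starting at $y_0=L_1$, it uses the Taylor expansion at $y_0$ to find an initial monotone interval, defines $y_1$ as the supremum of points to which strict monotonicity extends, shows $v'(y_1)=0$ and that the leading nonzero derivative at $y_1$ forces a flip of monotonicity, and then iterates; only at the end does it invoke the Rolle cascade to prove this process terminates. Your route is more global: you first establish outright that $Z=\{v'=0\}$ is finite, and then read off the partition from the sign pattern of $v'$ on the finitely many complementary intervals, keeping only those zeros at which the sign genuinely changes. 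Your version is arguably cleaner and avoids the paper's somewhat delicate supremum arguments and Taylor expansions; the paper's version, on the other hand, directly produces the $y_j$ as local extrema and makes the alternation of monotonicity transparent from the construction itself. One small point worth tightening in your write-up: when you apply Rolle ``between consecutive members of $\{z_n\}$'', you should first pass to a monotone subsequence of $\{z_n\}$ so that the resulting Rolle zeros lie in disjoint intervals and are therefore pairwise distinct; this is exactly the subtlety you flag as the ``main obstacle'', and it is easily handled.
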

\begin{proof}
\begin{itemize}
  \item[(1)]\textbf{Case $m=1$.}
  In this case, we have
  \begin{equation}\label{yj}
  \forall\, y \in [L_1,L_2],\quad |v'(y)| > 0,
  \end{equation}
  which implies that either 
  \begin{equation}\label{zf}
 v'(y) > 0\,(\forall\, y \in [L_1,L_2])\text{ or } v'(y) < 0\, (\forall\, y \in [L_1,L_2]).
  \end{equation}
Otherwise, there exist $\xi,\eta\in[L_1,L_2]$ such that $v'(\xi)<0$ and $v'(\eta)>0$.  
Without loss of generality, assume $\eta<\xi$. Since $v'$ is continuous, by the intermediate value theorem there exists $\zeta\in(\eta,\xi)$ such that $v'(\zeta)=0$, which contradicts \eqref{yj}.  
Therefore,  \eqref{zf} holds, corresponding respectively to $v$ being strictly increasing or strictly decreasing. In this case, take $N=1$, $y_0=L_1$, and $y_1=L_2$, and the case $m=1$ is proved.
\item[(2)]\textbf{Case $m\geq2$.} We now analyze the case $m\geq 2$. 

Let $y_0:=L_1,  k_0:=\min\{1\leq i\leq m:\ v^{(i)}(y_0)\neq 0\}.$ By the $m$-th order non-degeneracy condition on $v$, $k_0$ is well-defined. Without loss of generality, assume $v^{(k_0)}(y_0)>0$.  
Expanding $v'(y)$ at $y=y_0$ using Taylor’s theorem (in particular, when $k_0=1$ this reduces to the definition of continuity), we obtain
\[
v'(y)=\frac{v^{(k_0)}(y_0)}{(k_0-1)!}(y-y_0)^{k_0-1}+o\bigl(|y-y_0|^{k_0-1}\bigr),
\]
hence there exists $\delta_0'\in(0,L_2-y_0]$ such that, for any $y\in[y_0,y_0+\delta_0']$,
\[
\left|\,v'(y)-\frac{v^{(k_0)}(y_0)}{(k_0-1)!}(y-y_0)^{k_0-1}\right|
\leq \frac{v^{(k_0)}(y_0)}{2(k_0-1)!}(y-y_0)^{k_0-1},
\]
and therefore
\[
\frac{v^{(k_0)}(y_0)}{2(k_0-1)!}(y-y_0)^{k_0-1}\ \leq\ v'(y)\ \leq\
\frac{3v^{(k_0)}(y_0)}{2(k_0-1)!}(y-y_0)^{k_0-1}.
\]
Consequently, $v'(y)>0$ on $(y_0,y_0+\delta_0']$, and since $v$ is continuous at $y=y_0$, it follows that $v$ is strictly increasing on $[y_0,y_0+\delta_0']$.

Define $y_1:=\sup\{y\in(y_0,L_2]:\ v \text{ is strictly monotone on }[y_0,y]\}.$ Then $y_1$ is well-defined and $y_1\in[y_0+\delta_0',L_2]$. By the definition of the supremum, there exists an increasing sequence $\{z_k\}$ such that $v$ is strictly increasing on $[y_0,z_k]$ and
\begin{equation}\label{zk}
 \lim_{k\to\infty}z_k=y_1.
\end{equation}
We now prove $v$ is strictly increasing on $[y_0,y_1]$. Let $w_1,w_2\in[y_0,y_1]$ with $w_1<w_2$.  

\emph{Case 1:} $w_2<y_1$. Then there exists $k\in\mathbb{N}^*$ such that $w_2<z_k$. Hence $w_1,w_2\in[y_0,z_k]$, and since $v$ is strictly increasing on $[y_0,z_k]$, we have $v(w_1)<v(w_2)$.

\emph{Case 2:} $w_2=y_1$. Then there exists $k'\in\mathbb{N}^*$ such that for all $k\geq k'$, one has $w_1<z_k$. Since $v$ is strictly increasing on $[y_0,z_k]$, it follows that
\begin{equation}\label{sldd}
\forall k\geq k',\quad v(w_1)< v(z_k).
\end{equation}
Moreover, as $\{z_k\}$ is increasing, we have $z_k\in[y_0,z_{k+1}]$. Since $v$ is increasing on $[y_0,z_{k+1}]$, it follows that $v(z_k)\leq v(z_{k+1})$, so $\{v(z_k)\}$ is increasing. In particular,
\begin{equation}\label{kk}
\forall k\geq k',\quad v(z_{k'})\leq v(z_k).
\end{equation}
By the continuity of $v$,
\[
v(w_1)\overset{(\ref{sldd})}{<}v(z_{k'})\overset{(\ref{kk})}{\leq}\lim_{k\to\infty}v(z_k)
= v\!\left(\lim_{k\to\infty}z_k\right)\overset{(\ref{zk})}{=}v(y_1)=v(w_2).
\]
In conclusion, $v$ is strictly increasing on $[y_0,y_1]$.

If $y_1=L_2$, then $v$ is strictly increasing on $[y_0,y_1]=[L_1,L_2]$, and taking $N=1$ completes the proof.

Otherwise, $y_1\in[y_0+\delta_0',L_2)$, and we have $v'(y_1)=0$. Indeed, since $v$ is strictly increasing on $[y_0,y_1]$, we have $v'(y)\geq0$ for every $y\in(y_0,y_1)$. Letting $y\ \uparrow\ y_1$ and using the continuity of $v'$, we obtain
\begin{equation}\label{bdh}
 v'(y_1)\geq 0.
\end{equation}
If the inequality in \eqref{bdh} were strict, i.e. $v'(y_1)>0$, then by continuity of $v'$ there would exist $\delta_0''\in(0,L_2-y_1]$ such that $v'(y)>0$ for all $y\in[y_1,y_1+\delta_0'']$. Hence $v$ would be strictly increasing on $[y_1,y_1+\delta_0'']$, and combined with strict monotonicity on $[y_0,y_1]$ we would conclude that $v$ is strictly increasing on $[y_0,y_1+\delta_0'']$, contradicting the definition of $y_1$. Therefore,
\begin{equation}\label{dyl}
v'(y_1)=0.
\end{equation}

Let $k_1=\min\{1\leq i\leq m:\ v^{(i)}(y_1)\neq0\}.$ By the $m$-th order non-degeneracy condition, $k_1$ is well-defined and $v^{(k_1)}(y_1)\neq0$. From \eqref{dyl} we have $v'(y_1)=0$, hence $k_1>1$. We claim that $v^{(k_1)}(y_1)<0$.  
Otherwise, if $v^{(k_1)}(y_1)>0$, then Taylor’s formula gives
\begin{equation}\label{tl}
v'(y)=\frac{v^{(k_1)}(y_1)}{(k_1-1)!}(y-y_1)^{k_1-1}+o\bigl(|y-y_1|^{k_1-1}\bigr),
\end{equation}
so there exists $\delta_0'''\in(0,L_2-y_1]$ such that, for any $y\in[y_1,y_1+\delta_0''']$,
\[
\left|\,v'(y)-\frac{v^{(k_1)}(y_1)}{(k_1-1)!}(y-y_1)^{k_1-1}\right|
\leq \frac{v^{(k_1)}(y_1)}{2(k_1-1)!}(y-y_1)^{k_1-1},
\]
and hence
\[
\frac{v^{(k_1)}(y_1)}{2(k_1-1)!}(y-y_1)^{k_1-1}
\ \leq\ v'(y)\ \leq\
\frac{3v^{(k_1)}(y_1)}{2(k_1-1)!}(y-y_1)^{k_1-1}.
\]
Thus $v'(y)>0$ for all $y\in(y_1,y_1+\delta_0''']$, and by continuity at $y_1$ the function $v$ is strictly increasing on $[y_1,y_1+\delta_0''']$. Combined with the strict increase on $[y_0,y_1]$, this implies that $v$ is strictly increasing on $[y_0,y_1+\delta_0''']$, contradicting the definition of $y_1$. Hence $v^{(k_1)}(y_1)<0$.

Again by \eqref{tl}, there exists $\delta_1'\in(0,L_2-y_1]$ such that, for any $y\in(y_1,y_1+\delta_1']$,
\[
\left|\,v'(y)-\frac{v^{(k_1)}(y_1)}{(k_1-1)!}(y-y_1)^{k_1-1}\right|
\leq -\,\frac{v^{(k_1)}(y_1)}{2(k_1-1)!}(y-y_1)^{k_1-1},
\]
whence
\[
\frac{3v^{(k_1)}(y_1)}{2(k_1-1)!}(y-y_1)^{k_1-1}
\ \leq\ v'(y)\ \leq\
\frac{v^{(k_1)}(y_1)}{2(k_1-1)!}(y-y_1)^{k_1-1},
\]
so $v'(y)<0$ on $(y_1,y_1+\delta_1']$. By continuity of $v$ at $y_1$, $v$ is strictly decreasing on $[y_1,y_1+\delta_1']$.

Define
\[
y_2=\sup\Bigl\{y\in(y_1,L_2]:\ v \text{ is strictly decreasing on }[y_1,y]\Bigr\}.
\]
By a similar argument, $y_2$ is well-defined and $v$ is strictly decreasing on $[y_1,y_2]$. In this way, the monotonicity of $v$ differs on the adjacent intervals $[y_0,y_1]$ and $[y_1,y_2]$.

If $y_2=L_2$, take $N=2$ and the claim follows. Otherwise $v'(y_2)=0$, and the above procedure can be iterated further. We assert that this process terminates in finitely many steps; that is, there exists $N'\in\mathbb{N}^*$ such that $y_{N'}=L_2$.

Otherwise, the above construction produces an infinite strictly increasing sequence $\{y_i\}\subset[L_1,L_2]$ with $v'(y_i)=0$. By the monotone convergence theorem, there exists $\bar{y}\in[L_1,L_2]$ such that
\[
\lim_{i\to\infty}y_i=\bar{y}.
\]
Therefore, by continuity of $v'$,
\[
v'(\bar{y})=\lim_{i\to\infty}v'(y_i)=0.
\]
Since $v'(y_i)=v'(y_{i+1})=0$, Rolle’s theorem implies the existence of $r_i\in(y_i,y_{i+1})$ such that $v''(r_i)=0$. Hence, for all $i\in\mathbb{N}^*$,
\[
y_i<r_i<y_{i+1}<r_{i+1},
\]
so $\{r_i\}$ is a strictly increasing sequence and, by the squeeze theorem,
\[
\lim_{i\to\infty}r_i=\bar{y}.
\]
By continuity of $v''$,
\[
v''(\bar{y})=\lim_{i\to\infty}v''(r_i)=0.
\]

Continuing in this way, we conclude that $v^{(i)}(\bar{y})=0$ for every $i\in\{1,\ldots,m\}$, which contradicts the $m$-th order non-degeneracy condition on $v$. This completes the proof.
\end{itemize}
\end{proof}

By the piecewise monotonicity property given in Lemma \ref{piecewise strict monotonicity}, we obtain that the number of elements of the level set of $v$ admits a uniform bound on any bounded region.

\begin{lemma}\label{Uniformly finitely many intersection points}
Let $d=1$, $m \geq 1$, and $L_1, L_2$ are arbitrary real numbers with $L_1 < L_2$. Suppose $v:[L_1,L_2] \to \mathbb{R}$ is a $C^m$ function satisfying the $m$-th order non-degeneracy condition
\[
 |v'(y)| + |v''(y)| + \cdots + |v^{(m)}(y)| > 0,\quad \forall\, y \in [L_1,L_2].
\]
Then the number of elements of the level set of $v$ admits a uniform bound on $[L_1,L_2]$, namely, there exists $N_0 \in \mathbb{N}^*$ such that, for every $\lambda \in \mathbb{R}$,
$$
\mathrm{Card }(E_\lambda) \leq N_0.
$$
\end{lemma}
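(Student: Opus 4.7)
The plan is to reduce the claim directly to the piecewise strict monotonicity provided by Lemma \ref{piecewise strict monotonicity}, then observe that on each monotone piece the equation $v(y)=\lambda$ can have at most one solution, and finally sum contributions across the finitely many pieces with a uniform count.

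More concretely, I would first invoke Lemma \ref{piecewise strict monotonicity} to produce an integer $N\in\mathbb{N}^*$ and a partition $L_1=y_0<y_1<\cdots<y_N=L_2$ such that $v$ is strictly monotone on each closed subinterval $[y_{j-1},y_j]$. The key point is that $N$ depends only on the function $v$ and on the interval $[L_1,L_2]$, not on the parameter $\lambda$, since the construction of the partition never refers to $\lambda$. With this partition in hand, for any $\lambda\in\mathbb{R}$ I would write
\[
E_\lambda \;=\; \bigcup_{j=1}^{N}\bigl\{y\in[y_{j-1},y_j]\,:\,v(y)=\lambda\bigr\}.
\]

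On each subinterval $[y_{j-1},y_j]$, strict monotonicity implies that $v$ restricted to $[y_{j-1},y_j]$ is injective, so the fiber $\{y\in[y_{j-1},y_j]:v(y)=\lambda\}$ contains at most one point. Summing the bounds over the $N$ pieces yields $\mathrm{Card}(E_\lambda)\le N$ uniformly in $\lambda$; taking $N_0:=N$ completes the argument. (Shared endpoints $y_j$ only create duplicates inside the set union, so the honest cardinality of $E_\lambda$ as a subset of $[L_1,L_2]$ is if anything smaller than the naive sum $N$, which is harmless.)

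There is no real obstacle here once Lemma \ref{piecewise strict monotonicity} is available; the only subtlety to be careful about is precisely this double-counting issue at partition endpoints, and the $\lambda$-independence of $N$, which follows because the partition is constructed purely from the derivatives of $v$ via Taylor-expansion arguments at interior critical points.
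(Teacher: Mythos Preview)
Your proposal is correct and is essentially identical to the paper's own proof: the paper likewise invokes Lemma~\ref{piecewise strict monotonicity}, notes that strict monotonicity on each $[y_{j-1},y_j]$ allows at most one solution of $v(y)=\lambda$, and concludes $\operatorname{Card}(E_\lambda)\le N=:N_0$. Your extra remarks about endpoint double-counting and the $\lambda$-independence of $N$ are welcome clarifications but do not change the argument.
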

\begin{proof}
    By Lemma \ref{piecewise strict monotonicity}, we know that $v$ is piecewise strictly monotone. Hence, for any $\lambda \in \mathbb{R}$, the equation $v(y)=\lambda$ has at most one intersection point in each of the intervals $[y_0,y_1], [y_1,y_2], \ldots, [y_{N-1},y_N]$. Therefore,
\[\operatorname{card}(E_\lambda) \leq N. \]
By choosing $N_0=N,$ we complete the proof.
\end{proof}

In addition, we also need the estimate of $m(E^m_{\lambda',\delta})$ as a preliminary step. Since the proof of Lemma~\ref{prior estimate for E(lambda)} requires a clear case-by-case analysis, the argument is somewhat lengthy.

\begin{lemma}\label{prior estimate for E(lambda)}
Let $d=1$, $m \geq 1$, and $L_1, L_2$ are arbitrary real numbers with $L_1 < L_2$. Suppose $v:[L_1,L_2] \to \mathbb{R}$ is a $C^m$ function satisfying the $m$-th order non-degeneracy condition
\[
 |v'(y)| + |v''(y)| + \cdots + |v^{(m)}(y)| > 0,\quad \forall\, y \in [L_1,L_2].
\]
Denote $v_1 = \min_{y \in [L_1,L_2]} v(y), \ 
v_2 = \max_{y \in [L_1,L_2]} v(y)$, then $v_1 < v_2$, and the following holds:
  for all $\lambda \in [v_1,v_2]$, there exist $\delta_0(\lambda) > 0$ and $C(\lambda) > 0$ such that, for any 
$\lambda' \in \bigl(\lambda - (\delta_0(\lambda))^m, \, \lambda + (\delta_0(\lambda))^m\bigr)$ and any 
$\delta \in (0, \delta_0(\lambda)]$, we have
\[
m(E^m_{\lambda',\delta}) \;\leq\; C(\lambda)\,\delta.
\]
\end{lemma}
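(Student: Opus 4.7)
The plan is to localize near each point of the finite level set $E_\lambda$ and to run a case split on the relative size of $\eta:=\lambda'-\lambda$ and $\delta^m$. First, $v_1<v_2$ is immediate: $v_1=v_2$ would make $v$ constant, violating the $m$-th order non-degeneracy. Fix $\lambda\in[v_1,v_2]$; the intermediate value theorem makes $E_\lambda:=\{y\in[L_1,L_2]:v(y)=\lambda\}$ non-empty, and Lemma \ref{Uniformly finitely many intersection points} provides a finite enumeration $E_\lambda=\{y_1^*,\ldots,y_p^*\}$. For each $j$ I set $k_j:=\min\{1\leq i\leq m:v^{(i)}(y_j^*)\neq 0\}\in\{1,\ldots,m\}$ and $c_j:=v^{(k_j)}(y_j^*)/k_j!\neq 0$, both well-defined by the non-degeneracy.

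Next I would apply Taylor's theorem to $v$ and to $v'$ at each $y_j^*$, obtaining pairwise disjoint relative neighborhoods $I_j\ni y_j^*$ in $[L_1,L_2]$ on which
\[
\tfrac{|c_j|}{2}|y-y_j^*|^{k_j}\leq |v(y)-\lambda|\leq \tfrac{3|c_j|}{2}|y-y_j^*|^{k_j},\qquad |v'(y)|\geq \tfrac{k_j|c_j|}{2}|y-y_j^*|^{k_j-1}.
\]
On the compact complement $K:=[L_1,L_2]\setminus\bigcup_j I_j$, the continuous function $|v-\lambda|$ attains a positive minimum $\epsilon_0$. Choosing $\delta_0(\lambda)\in(0,1]$ so small that $2\delta_0^m<\epsilon_0$ forces $E^m_{\lambda',\delta}\cap K=\emptyset$ for every $\lambda'\in(\lambda-\delta_0^m,\lambda+\delta_0^m)$ and $\delta\in(0,\delta_0]$, so the problem reduces to bounding $m(A_j):=m(E^m_{\lambda',\delta}\cap I_j)$ for each $j$.

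The local estimate is the heart of the argument. Write $\eta:=\lambda'-\lambda$ with $|\eta|<\delta_0^m$. If $|\eta|\leq 3\delta^m$, then on $A_j$ the triangle inequality gives $|v(y)-\lambda|\leq|\eta|+\delta^m\leq 4\delta^m$, and the lower Taylor bound forces $|y-y_j^*|\leq (8\delta^m/|c_j|)^{1/k_j}$, hence $m(A_j)\leq C_j\delta^{m/k_j}\leq C_j\delta$ (using $k_j\leq m$ and $\delta\leq 1$). If $|\eta|>3\delta^m$, then the reverse triangle inequality $|v(y)-\lambda|\geq |\eta|-\delta^m$ combined with the upper Taylor bound gives $|y-y_j^*|\geq (2(|\eta|-\delta^m)/(3|c_j|))^{1/k_j}$ on $A_j$, so $|v'(y)|\gtrsim (|\eta|-\delta^m)^{(k_j-1)/k_j}$ there; since the Taylor expansion of $v'$ shows $v$ is strictly monotone on each side of $y_j^*$ in $I_j$, and since $v$ maps each connected component of $A_j$ onto an interval of length $\leq 2\delta^m$, the mean value theorem yields $m(A_j)\lesssim \delta^m/(|\eta|-\delta^m)^{(k_j-1)/k_j}\lesssim \delta^{m/k_j}\leq \delta$. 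Summing $m(A_j)$ over $j=1,\ldots,p$ gives $m(E^m_{\lambda',\delta})\leq C(\lambda)\delta$.

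The main technical obstacle I anticipate is pinning down the transition region $|\eta|\sim\delta^m$, where the geometry of $A_j$ changes from a single short interval centered at $y_j^*$ (in the regime $|\eta|\lesssim\delta^m$) to one or two thin strips at distance $\sim |\eta|^{1/k_j}$ from $y_j^*$ (in the regime $|\eta|\gg\delta^m$). The threshold $|\eta|=3\delta^m$ is chosen precisely so that $|\eta|-\delta^m\geq |\eta|/2\geq\delta^m$ in the second regime, which makes the mean-value step and the direct Taylor bound produce estimates of the same order $\delta^{m/k_j}$, so the two cases glue cleanly. A secondary subtlety, handled implicitly above, is the case $y_j^*\in\{L_1,L_2\}$, where $I_j$ is one-sided; the Taylor estimates and the monotonicity of $v$ on one side of $y_j^*$ are unaffected, so the same bounds apply.
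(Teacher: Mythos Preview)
Your proposal is correct and shares the paper's overall strategy---localize near the finite set $E_\lambda$, apply Taylor bounds on each neighborhood, and split on the size of $\eta=\lambda'-\lambda$---but the execution differs. The paper places the case-split threshold at $|\eta|=\delta^{\tilde k_i}$ (depending on the local vanishing order), treats odd and even $\tilde k_i$ in separate multi-case sub-arguments, and in the large-shift regime bounds $|y-y_i(\lambda')|$ via the algebraic factorization of $(y-\tilde y_i)^{\tilde k_i}-(y_i(\lambda')-\tilde y_i)^{\tilde k_i}$; you instead use the uniform threshold $3\delta^m$, a pointwise lower bound on $|v'|$, and the mean-value theorem, which handles all parities at once and is considerably shorter. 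Both routes land on bounds of order $\delta^{m/k_j}\le\delta$. What the paper's longer argument buys is an explicit covering of each $E^{m,i}_{\lambda',\delta}$ by one or two named intervals of length $\le C_i(\lambda)\delta$ (equations \eqref{cf} and \eqref{kl}), which are then quoted verbatim in the proof of Lemma~\ref{mathcal{E}(lambda)} to control the $\delta$-thickening $\mathcal{E}^m_{\lambda,\delta}$; your argument yields the same covering implicitly (the at most two monotone components of $A_j$), so it would feed into that lemma equally well.
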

\begin{proof}From the definition of $v_1$ and $v_2$ it follows that $v_1 \leq v_2$.  
If equality holds, i.e. $v_1 = v_2$, then $v$ is a constant function on $\Omega$, and all its derivatives vanish, which contradicts the $m$-th order non-degeneracy condition on $v$.  
Hence, $v_1 < v_2$.

For any $\lambda \in [v_1,v_2]$, by the intermediate value theorem for continuous functions, there exists some $\xi \in [L_1,L_2]$ such that $v(\xi) = \lambda$.  
Thus,
\[
E_\lambda = \{ y \in [L_1,L_2] : v(y) = \lambda \} \neq \varnothing.
\]

By Lemma \ref{Uniformly finitely many intersection points}, we know that $E_\lambda$ is a finite set, and $1 \leq N(\lambda) := \operatorname{card}(E_\lambda) \leq N_0$, where $N_0$ is given in Lemma \ref{Uniformly finitely many intersection points}. Without loss of generality, we may write
\begin{equation}\label{el}
E_\lambda = \{\tilde y_1,\ldots,\tilde y_{N(\lambda)}\}, 
\qquad L_1 \leq \tilde y_1< \cdots < \tilde y_{N(\lambda)} \leq L_2.
\end{equation}
For any $i \in \{1,\ldots, N(\lambda)\}$, define
\begin{equation}\label{ki}
\tilde k_i = \min \Bigl\{\, 1 \leq j \leq m : v^{(j)}(\tilde y_i) \neq 0 \Bigr\}.
\end{equation}
Since $v$ satisfies the $m$-th order non-degeneracy condition, $\tilde k_i$ is well-defined and $v^{(\tilde k_i)}(\tilde y_i) \neq 0$.

Expanding $v(y)$ in a Taylor series near $y=\tilde y_i$, we obtain
\begin{align*}
v(y) &= v(\tilde y_i) + \frac{v^{(\tilde k_i)}(\tilde y_i)}{\tilde k_i!}(y-\tilde y_i)^{\tilde k_i}
       + o\!\left(|y-\tilde y_i|^{\tilde k_i}\right) \\
&= \lambda + \frac{v^{(\tilde k_i)}(\tilde y_i)}{\tilde k_i!}(y-\tilde y_i)^{\tilde k_i}
       + o\!\left(|y-\tilde y_i|^{\tilde k_i}\right).
\end{align*}
Hence, $\exists\,\varepsilon_i \in (0,1)$ such that, for any 
$y \in (\tilde y_i - \varepsilon_i,\tilde y_i + \varepsilon_i)\cap [L_1,L_2]$, we have
\begin{equation}\label{yi}
\frac{|v^{(\tilde k_i)}(\tilde y_i)|}{2\tilde k_i!}\,|y-\tilde y_i|^{\tilde k_i}
\;\leq\; |v(y)-\lambda|
\;\leq\; \frac{3|v^{(\tilde k_i)}(\tilde y_i)|}{2\tilde k_i!}\,|y-\tilde y_i|^{\tilde k_i}.
\end{equation}
Moreover, by the continuity of $v^{(\tilde k_i)}(y)$, $\exists 
\,\varepsilon_i' \in (0,1)$ such that, for any 
$y \in (\tilde y_i - \varepsilon_i',\tilde y_i + \varepsilon_i')\cap [L_1,L_2]$, we have
\begin{equation}\label{vki}
\bigl|v^{(\tilde k_i)}(y) - v^{(\tilde k_i)}(\tilde y_i)\bigr|
< \frac{|v^{(\tilde k_i)}(\tilde y_i)|}{2}.
\end{equation}

Define
\begin{align*}
\varepsilon_0 &= \min\bigl\{\,\varepsilon_i : 1 \leq i \leq N(\lambda)\bigr\}, \\
\varepsilon_0' &= \min\bigl\{\,\varepsilon_i' : 1 \leq i \leq N(\lambda)\bigr\}, \\
\varepsilon_0'' &= \tfrac{1}{4}\,\min\bigl\{\,|\tilde y_i - \tilde y_j| : 1 \leq i,j \leq N(\lambda),\ i \neq j \bigr\}.
\end{align*}

 Taking into account the relative positions of $\tilde y_1$ and $L_1$, as well as $\tilde y_{N(\lambda)}$ and $L_2$, we distinguish the following four cases:
\begin{align}\label{si}
\begin{split}
&\text{Case a.1:}\ \tilde y_1 = L_1,\ \tilde y_{N(\lambda)} = L_2;\\
&\text{Case a.2:}\ \tilde y_1 > L_1,\ \tilde y_{N(\lambda)} = L_2;\\
&\text{Case a.3:}\ \tilde y_1 = L_1,\ \tilde y_{N(\lambda)} < L_2;\\
&\text{Case a.4:}\ \tilde y_1 > L_1,\ \tilde y_{N(\lambda)} < L_2.
\end{split}
\end{align}
In the four different cases given in \eqref{si}, we define $\varepsilon(\lambda)$ as follows:
\begin{equation}
\varepsilon(\lambda) =
\begin{cases}
\dfrac{1}{2}\min\{\varepsilon_0,\varepsilon_0',\varepsilon_0''\}, 
& \tilde y_1 = L_1,\ \tilde y_{N(\lambda)} = L_2, \\[1ex]
\dfrac{1}{2}\min\{\varepsilon_0,\varepsilon_0',\varepsilon_0'',\ \tilde y_1 - L_1\}, 
& \tilde y_1 > L_1,\ \tilde y_{N(\lambda)} = L_2, \\[1ex]
\dfrac{1}{2}\min\{\varepsilon_0,\varepsilon_0',\varepsilon_0'',\ L_2 - \tilde y_{N(\lambda)}\}, 
& \tilde y_1 = L_1,\ \tilde y_{N(\lambda)} < L_2, \\[1ex]
\dfrac{1}{2}\min\{\varepsilon_0,\varepsilon_0',\varepsilon_0'',\ \tilde y_1 - L_1,\ L_2 - \tilde y_{N(\lambda)}\}, 
& \tilde y_1 > L_1,\ \tilde y_{N(\lambda)} < L_2.
\end{cases}
\end{equation}
Thus $\varepsilon_0,\varepsilon_0',\varepsilon_0''$, and $\varepsilon(\lambda)$ are all strictly positive.  
For brevity, we shall denote $\varepsilon(\lambda)$ simply by $\varepsilon$,  
with the understanding that its expression differs slightly depending on the four cases.

Therefore, for any $i,j \in \{1,\ldots,N(\lambda)\}$ with $i \neq j$, we have
\begin{equation}\label{bj}
[\tilde y_i - \varepsilon,\ \tilde y_i + \varepsilon] 
\;\cap\; 
[\tilde y_j - \varepsilon,\ \tilde y_j + \varepsilon] 
= \varnothing.
\end{equation}
Moreover, $\tilde y_1 - \varepsilon 
< \tilde y_1 + \varepsilon 
< \tilde y_2 - \varepsilon 
< \tilde y_2 + \varepsilon 
< \cdots 
< \tilde y_{N(\lambda)} - \varepsilon 
< \tilde y_{N(\lambda)} + \varepsilon.$

When $N(\lambda) \geq 2$, for any $i \in \{1,\ldots, N(\lambda)-1\}$,  
since $|v(y)-\lambda|$ is continuous with respect to $y$ on $[\tilde y_i + \varepsilon,\ \tilde y_{i+1} - \varepsilon]$,  
it attains a minimum value, which we denote by
\[
\eta_i = \min\bigl\{\,|v(y)-\lambda| : y \in [\tilde y_i + \varepsilon,\ \tilde y_{i+1} - \varepsilon] \bigr\}.
\]
We deduce that $\eta_i > 0$.  
Otherwise, by the definition of the minimum, there would exist some 
$\hat y \in [\tilde y_i + \varepsilon,\ \tilde y_{i+1} - \varepsilon]$ such that 
$|v(\hat y) - \lambda| = 0$, i.e. $v(\hat y) = \lambda$, which implies 
$\hat y \in E_\lambda = \{\tilde y_1,\tilde y_2,\ldots,\tilde y_{N(\lambda)}\}$. However, since
\[
\tilde y_1 < \tilde y_2 < \cdots < \tilde y_i 
< \tilde y_i + \varepsilon 
< \hat y 
< \tilde y_{i+1} - \varepsilon 
< \tilde y_{i+1} 
< \cdots < \tilde y_{N(\lambda)},
\]
it follows that $\hat y$ cannot coincide with any element of $E_\lambda$.  
This contradicts the fact that $\hat y \in E_\lambda$.  
Therefore, $\eta_i > 0$.

When $\tilde y_1 > L_1$, let $\eta_0 = \min\{\, |v(y) - \lambda| : y \in [L_1,\, \tilde y_1 - \varepsilon] \}$. By the same reasoning as above, we obtain $\eta_0 > 0$. When $\tilde y_{N(\lambda)} < L_2$, let $\eta_{N(\lambda)} = \min\{\, |v(y) - \lambda| : y \in [\tilde y_{N(\lambda)} + \varepsilon,\, L_2] \}$. Again, we obtain $\eta_{N(\lambda)} > 0$.

In the four different cases given in \eqref{si}, we define $\tilde{\eta}(\lambda)$ as follows:
\begin{equation}\label{sg}
\tilde{\eta}(\lambda) =
\begin{cases}
\dfrac{1}{4}\min\{\eta_1,\eta_2,\ldots,\eta_{N(\lambda)-1},\,1\}, 
& \tilde y_1 = L_1,\ \tilde y_{N(\lambda)} = L_2, \\[1ex]
\dfrac{1}{4}\min\{\eta_0,\eta_1,\eta_2,\ldots,\eta_{N(\lambda)-1},\,1\}, 
& \tilde y_1 > L_1,\ \tilde y_{N(\lambda)} = L_2, \\[1ex]
\dfrac{1}{4}\min\{\eta_1,\eta_2,\ldots,\eta_{N(\lambda)-1},\eta_{N(\lambda)},\,1\}, 
& \tilde y_1 = L_1,\ \tilde y_{N(\lambda)} < L_2, \\[1ex]
\dfrac{1}{4}\min\{\eta_0,\eta_1,\eta_2,\ldots,\eta_{N(\lambda)-1},\eta_{N(\lambda)},\,1\}, 
& \tilde y_1 > L_1,\ \tilde y_{N(\lambda)} < L_2.
\end{cases}
\end{equation}
Thus $\tilde{\eta}(\lambda) \in (0,1)$, and consequently
\begin{equation}\label{dl}
\tilde\delta_0(\la) := \tilde{\eta}(\lambda)^{1/m} \in (0,1).
\end{equation}

Take any
\begin{equation}\label{rq}
\lambda' \in (\lambda - (\tilde\delta_0(\lambda))^m,\ \lambda + (\tilde\delta_0(\lambda))^m) 
= (\lambda - \tilde{\eta}(\lambda),\ \lambda + \tilde{\eta}(\lambda)),
\end{equation}
and any $\delta \in (0,\tilde\delta_0(\lambda)]$.  
Consider the case a.4 in \eqref{si}, namely $\tilde y_1 > L_1$ and 
$\tilde y_{N(\lambda)} < L_2$. For any $y \in [L_1,\tilde y_1 - \varepsilon]$, we have
\begin{align*}
|v(y) - \lambda'| 
&\geq |v(y) - \lambda| - |\lambda - \lambda'| 
\;\geq\; \eta_0 - \tilde{\eta}(\lambda) 
\;>\; 2\tilde{\eta}(\lambda) 
= 2(\tilde\delta_0(\lambda))^m
>\; \delta^m.
\end{align*}
Hence $y \notin E_{\lambda',\delta}^m$.  
But since $y \in [L_1,L_2]$, it follows that $y \in [L_1,L_2]\setminus E_{\lambda',\delta}^m$.  
Therefore,
\[
[L_1,\tilde y_1] \;\subset\; [L_1,L_2]\setminus E_{\lambda',\delta}^m.
\]

Similarly,
\[
[\tilde y_{N(\lambda)} + \varepsilon,\ L_2] 
\;\subset\; [L_1,L_2]\setminus E_{\lambda',\delta}^m.
\]

When $N(\lambda) \geq 2$, similarly, for any 
$i \in \{1,\ldots, N(\lambda)-1\}$, we have
\[
[\tilde y_i + \varepsilon,\ \tilde y_{i+1} - \varepsilon] 
\;\subset\; [L_1,L_2]\setminus E_{\lambda',\delta}^m.
\]

Consequently,
\begin{align*}
&[L_1,L_2] \setminus \Bigl(\,\bigcup_{i=1}^{N(\lambda)}(\tilde y_i - \varepsilon,\ \tilde y_i + \varepsilon)\Bigr) \\
= &[L_1,\tilde y_1 - \varepsilon]
   \;\cup\; \Bigl(\,\bigcup_{i=1}^{N(\lambda)-1} [\tilde y_i + \varepsilon,\ \tilde y_{i+1} - \varepsilon] \Bigr)
   \;\cup\; [\tilde y_{N(\lambda)} + \varepsilon, L_2] \subset [L_1,L_2]\setminus E_{\lambda',\delta}^m,
\end{align*}
and therefore
\begin{equation}\label{ela}
E_{\lambda',\delta}^m \;\subset\; \bigcup_{i=1}^{N(\lambda)}(\tilde y_i - \varepsilon,\ \tilde y_i + \varepsilon).
\end{equation}
When $N(\lambda) = 1$, a similar argument shows that \eqref{ela} still holds.

Next, according to \eqref{ela}, we have
\begin{align}\label{jj}
\begin{split}
E_{\lambda',\delta}^m
&= \Bigl(\,\bigcup_{i=1}^{N(\lambda)}(\tilde y_i - \varepsilon,\ \tilde y_i + \varepsilon)\Bigr)\cap E_{\lambda',\delta}^m = \bigcup_{i=1}^{N(\lambda)}\Bigl((\tilde y_i - \varepsilon,\ \tilde y_i + \varepsilon)\cap E_{\lambda',\delta}^m\Bigr) =: \bigcup_{i=1}^{N(\lambda)} E^{m,i}_{\lambda',\delta}.
\end{split}
\end{align}
Since $E^{m,i}_{\lambda',\delta} \subset (\tilde y_i - \varepsilon,\ \tilde y_i + \varepsilon)$,  
by \eqref{bj} it follows that, for any $i,j \in \{1,\ldots,N(\lambda)\}$ with $i \neq j$, 
\[
E^{m,i}_{\lambda',\delta} \cap E^{m,j}_{\lambda',\delta} 
\subset 
(\tilde y_i - \varepsilon,\ \tilde y_i + \varepsilon) 
\cap (\tilde y_j - \varepsilon,\ \tilde y_j + \varepsilon) 
= \varnothing.
\]
Thus the sets $E^{m,i}_{\lambda',\delta}$ are pairwise disjoint.  
Combining this with \eqref{jj}, we obtain
\begin{equation}\label{qh}
m(E_{\lambda',\delta}^m)= \sum_{i=1}^{N(\lambda)} m(E^{m,i}_{\lambda',\delta}).
\end{equation}

Take any $i \in \{1,\ldots, N(\lambda)\}$.  
Without loss of generality, assume that $v^{(\tilde k_i)}(\tilde y_i) > 0$ (the case $<0$ is analogous).  
We distinguish between the cases where \underline{\textbf{$\tilde k_i$ is odd}} and where \underline{\textbf{$\tilde k_i$ is even}}.
\begin{itemize}
    \item[\tcr{\textbf{(A)}}] \textbf{If $\tilde k_i$ is odd, then $\tilde k_i - 1$ is even.}  
By \eqref{vki}, for any $y \in [\tilde y_i - \varepsilon,\ \tilde y_i + \varepsilon]$, we have
\[
\bigl| v^{(\tilde k_i)}(y) - v^{(\tilde k_i)}(\tilde y_i) \bigr|
< \tfrac{1}{2} v^{(\tilde k_i)}(\tilde y_i).
\]
It follows that
\begin{equation}\label{ss}
\tfrac{1}{2} v^{(\tilde k_i)}(\tilde y_i)
< v^{(\tilde k_i)}(y)
< \tfrac{3}{2} v^{(\tilde k_i)}(\tilde y_i).
\end{equation}

When $y \in (\tilde y_i,\ \tilde y_i + \varepsilon]$, integrating \eqref{ss} over $[\tilde y_i,y]$ gives
\[
\int_{\tilde y_i}^y \tfrac{1}{2} v^{(\tilde k_i)}(\tilde y_i)\,dz
\;\leq\; \int_{\tilde y_i}^y v^{(\tilde k_i)}(z)\,dz
\;\leq\; \int_{\tilde y_i}^y \tfrac{3}{2} v^{(\tilde k_i)}(\tilde y_i)\,dz.
\]
Since $v^{(\tilde k_i-1)}(\tilde y_i)=0$, it follows that
\begin{equation}\label{sss}
\tfrac{1}{2} v^{(\tilde k_i)}(\tilde y_i)(y-\tilde y_i)
\;\leq\; v^{(\tilde k_i-1)}(y)
\;\leq\; \tfrac{3}{2} v^{(\tilde k_i)}(\tilde y_i)(y-\tilde y_i).
\end{equation}
Integrating \eqref{sss} again over $[\tilde y_i,y]$, we obtain
\[
\int_{\tilde y_i}^y \tfrac{1}{2} v^{(\tilde k_i)}(\tilde y_i)(z-\tilde y_i)\,dz
\;\leq\; \int_{\tilde y_i}^y v^{(\tilde k_i-1)}(z)\,dz
\;\leq\; \int_{\tilde y_i}^y \tfrac{3}{2} v^{(\tilde k_i)}(\tilde y_i)(z-\tilde y_i)\,dz,
\]
that is, 
\[
\tfrac{1}{4} v^{(\tilde k_i)}(\tilde y_i)(y-\tilde y_i)^2
\;\leq\; v^{(\tilde k_i-2)}(y)
\;\leq\; \tfrac{3}{4} v^{(\tilde k_i)}(\tilde y_i)(y-\tilde y_i)^2.
\]
Proceeding in this way, we finally obtain
\[
\frac{v^{(\tilde k_i)}(\tilde y_i)}{2(\tilde k_i-1)!}(y-\tilde y_i)^{\tilde k_i-1}
\;\leq\; v'(y)
\;\leq\; \frac{3 v^{(\tilde k_i)}(\tilde y_i)}{2(\tilde k_i-1)!}(y-\tilde y_i)^{\tilde k_i-1}.
\]
Let
\begin{equation}\label{k}
\kappa_i = \frac{v^{(\tilde k_i)}(\tilde y_i)}{2(\tilde k_i-1)!}, 
\qquad 
\kappa'_i = \frac{3 v^{(\tilde k_i)}(\tilde y_i)}{2(\tilde k_i-1)!},
\end{equation}
so that
\begin{equation}\label{ka}
\kappa_i (y-\tilde y_i)^{\tilde k_i-1}
\;\leq\; v'(y)
\;\leq\; \kappa'_i (y-\tilde y_i)^{\tilde k_i-1}.
\end{equation}
Similarly, one can show that when $y \in [\tilde y_i - \varepsilon,\ \tilde y_i)$,  
\eqref{ka} still holds. Moreover, by the continuity of $v'$, the inequality  
\eqref{ka} also holds at $y = \tilde y_i$. Therefore,
\begin{equation}\label{yy}
\forall\, y \in [\tilde y_i - \varepsilon,\ \tilde y_i + \varepsilon],\quad
\kappa_i (y - \tilde y_i)^{\tilde k_i - 1}
\;\leq\; v'(y)
\;\leq\; \kappa'_i (y - \tilde y_i)^{\tilde k_i - 1}.
\end{equation}

Therefore, for any $y \in [\tilde y_i - \varepsilon,\ \tilde y_i) \cup (\tilde y_i,\ \tilde y_i + \varepsilon]$,  
we have $v'(y) > 0$. Since $v(y)$ is continuous at $y=\tilde y_i$,  
it follows that $v$ is strictly increasing on $[\tilde y_i - \varepsilon,\ \tilde y_i + \varepsilon]$.  
Thus,
\begin{align}\label{dx}
\begin{split}
v(\tilde y_i + \varepsilon) &= \max_{y \in [\tilde y_i - \varepsilon,\ \tilde y_i + \varepsilon]} v(y) > v(\tilde y_i) = \lambda, \\
v(\tilde y_i - \varepsilon) &= \min_{y \in [\tilde y_i - \varepsilon,\ \tilde y_i + \varepsilon]} v(y) < v(\tilde y_i) = \lambda.
\end{split}
\end{align}
Let $\eta_i' = \min\{\, \lambda - v(\tilde y_i - \varepsilon),\; v(\tilde y_i + \varepsilon) - \lambda \,\}$. Then, by \eqref{dx}, we have $\eta_i' > 0$.

Define
\[
\hat\delta_i(\lambda) = \min\Bigl\{\, \tilde{\delta}_0(\lambda),\; (\eta_i')^{1/m} \Bigr\}.
\]
By \eqref{dl}, then $\hat\delta_i(\lambda) \in (0,1)$. $\forall\lambda' \in \bigl(\lambda - (\hat\delta_i(\lambda))^m,\ \lambda + (\hat\delta_i(\lambda))^m\bigr) 
\subset \bigl(v(\tilde y_i - \varepsilon),\ v(\tilde y_i + \varepsilon)\bigr)$,  
by the intermediate value theorem for continuous functions, there exists 
$y_i(\lambda') \in (\tilde y_i - \varepsilon,\ \tilde y_i + \varepsilon)$ such that
\begin{equation}\label{wyi}
v\bigl(y_i(\lambda')\bigr) = \lambda'.
\end{equation}
Moreover, since $v$ is strictly increasing on $[\tilde y_i - \varepsilon,\ \tilde y_i + \varepsilon]$,  
the point $y_i(\lambda')$ satisfying \eqref{wyi} is unique in this interval.  
Again, because $v$ is strictly increasing on $[\tilde y_i - \varepsilon,\ \tilde y_i + \varepsilon]$, we have
\begin{equation}\label{xc}
(\lambda' - \lambda)\,\bigl(y_i(\lambda') - \tilde y_i\bigr) 
= \bigl(v(y_i(\lambda')) - v(\tilde y_i)\bigr)\,\bigl(y_i(\lambda') - \tilde y_i\bigr) \;\geq\; 0.
\end{equation}

Note that for all $\lambda' \in (\lambda - (\hat\delta_i(\lambda))^m,\ \lambda + (\hat\delta_i(\lambda))^m)$  
and all $\delta \in (0,\hat\delta_i(\lambda)]$, there are five possible cases:
\begin{align}\label{wu}
\begin{split}
&\text{Case b.1:}\quad \lambda + \delta^{\tilde k_i} \leq \lambda' < \lambda + (\hat\delta_i(\lambda))^m, \\[0.5ex]
&\text{Case b.2:}\quad \lambda < \lambda' < \lambda + \min\{\delta^{\tilde k_i},\, (\hat\delta_i(\lambda))^m\}, \\[0.5ex]
&\text{Case b.3:}\quad \lambda' = \lambda, \\[0.5ex]
&\text{Case b.4:}\quad \lambda - \min\{\delta^{\tilde k_i},\, (\hat\delta_i(\lambda))^m\} < \lambda' < \lambda, \\[0.5ex]
&\text{Case b.5:}\quad \lambda - (\hat\delta_i(\lambda))^m < \lambda' \leq \lambda - \delta^{\tilde k_i}.
\end{split}
\end{align}
In fact, when $\delta^{\tilde k_i} \geq (\hat\delta_i(\lambda))^m$,  
the values of $\lambda'$ corresponding to Cases b.1 and b.5 do not actually exist. When $\lambda < \lambda' < \lambda + (\hat\delta_i(\lambda))^m$,  
corresponding to Cases b.1 and b.2 in \eqref{wu}, it follows from \eqref{xc} that
\begin{equation}\label{zy}
\tilde y_i(\lambda') - \tilde y_i > 0, 
\quad \text{i.e. } \; y_i(\lambda') > \tilde y_i.
\end{equation}
\begin{itemize}
    \item[\textbf{(A.1)}] \textbf{Cases b.1 and b.5.} When $\lambda + \delta^{\tilde k_i} \leq \lambda' < \lambda + (\hat\delta_i(\lambda))^m$,  
that is, in Case b.1 of \eqref{wu},
\begin{equation}\label{EE}
\la'-\la\geq\de^{\tilde{k}_i}.
\end{equation}
Take any $y \in E^{m,i}_{\lambda',\delta}$, then $|v(y) - \lambda'| < \delta^m \leq \delta^{\tilde k_i}$, which implies
\[
0 \leq \lambda' - \lambda - \delta^{\tilde k_i}
< v(y) - \lambda
< \lambda' - \lambda + \delta^{\tilde k_i}.
\]
Hence,
\begin{equation}\label{dy}
v(y) > \lambda = v(\tilde y_i).
\end{equation}

Combining this with the fact that $v$ is strictly increasing on 
$[\tilde y_i - \varepsilon,\ \tilde y_i + \varepsilon]$, then
\begin{equation}\label{dddz}
y > \tilde y_i.
\end{equation}
Note that for any $y \in [\tilde y_i - \varepsilon,\ \tilde y_i]$,  
using the monotonicity of $v$ on $[\tilde y_i - \varepsilon,\ \tilde y_i + \varepsilon]$, we have $v(y) \leq v(\tilde y_i) = \lambda$, that is,
\begin{equation}\label{xy}
\forall\, y \in [\tilde y_i - \varepsilon,\ \tilde y_i], 
\quad v(y) \leq \lambda = v(\tilde y_i).
\end{equation}

Combining \eqref{dy} with \eqref{xy}, when 
$\lambda + \delta^{\tilde k_i} < \lambda' < \lambda + (\hat\delta_i(\lambda))^m$,  
for any $y \in E^{m,i}_{\lambda',\delta}$ we must have $y \notin [\tilde y_i - \varepsilon,\ \tilde y_i]$.  
Together with the definition of $E^{m,i}_{\lambda',\delta}$ in \eqref{jj}, this implies $y \in E^{m,i}_{\lambda',\delta} \setminus [\tilde y_i - \varepsilon,\ \tilde y_i] 
\;\subset\; (\tilde y_i - \varepsilon,\ \tilde y_i + \varepsilon) \setminus [\tilde y_i - \varepsilon,\ \tilde y_i] 
= (\tilde y_i,\ \tilde y_i + \varepsilon)$. Therefore,
\[
E^{m,i}_{\lambda',\delta} \;\subset\; (\tilde y_i,\ \tilde y_i + \varepsilon).
\]
\quad Since \eqref{zy} shows that $\tilde y_i < y_i(\lambda')$, integrating \eqref{ka} over 
$[\tilde y_i,\ y_i(\lambda')]$ gives
\[
\int_{\tilde y_i}^{y_i(\lambda')} \kappa_i (y-\tilde y_i)^{\tilde k_i - 1}\,dy
\;\leq\; \int_{\tilde y_i}^{y_i(\lambda')} v'(y)\,dy
\;\leq\; \int_{\tilde y_i}^{y_i(\lambda')} \kappa'_i (y-\tilde y_i)^{\tilde k_i - 1}\,dy,
\]
which yields
\[
\frac{\kappa_i}{\tilde k_i}\,(y_i(\lambda') - \tilde y_i)^{\tilde k_i}
\;\leq\; v(y_i(\lambda')) - v(\tilde y_i)
\;\leq\; \frac{\kappa'_i}{\tilde k_i}\,(y_i(\lambda') - \tilde y_i)^{\tilde k_i}.
\]

Since $v(y_i(\lambda')) = \lambda'$ and $v(\tilde y_i) = \lambda$, we obtain
\begin{equation}\label{lb}
\left(\frac{\tilde k_i(\lambda' - \lambda)}{\kappa'_i}\right)^{1/\tilde k_i}
\;\leq\; y_i(\lambda') - \tilde y_i
\;\leq\; \left(\frac{\tilde k_i(\lambda' - \lambda)}{\kappa_i}\right)^{1/\tilde k_i}.
\end{equation}

\quad For any $y$ satisfying $y_i(\lambda') \leq y \leq \tilde y_i + \varepsilon$,  
integrating \eqref{ka} over $[y_i(\lambda'),\, y]$ gives
\[
\int_{y_i(\lambda')}^{y} \kappa_i (z-\tilde y_i)^{\tilde k_i - 1}\,dz
\;\leq\; \int_{y_i(\lambda')}^{y} v'(z)\,dz
\;\leq\; \int_{y_i(\lambda')}^{y} \kappa'_i (z-\tilde y_i)^{\tilde k_i - 1}\,dz,
\]
hence
\begin{equation}\label{zuo}
\frac{\kappa_i}{\tilde k_i}\Bigl[(y-\tilde y_i)^{\tilde k_i} - (y_i(\lambda')-\tilde y_i)^{\tilde k_i}\Bigr]
\;\leq\; v(y) - \lambda'
\;\leq\; \frac{\kappa'_i}{\tilde k_i}\Bigl[(y-\tilde y_i)^{\tilde k_i} - (y_i(\lambda')-\tilde y_i)^{\tilde k_i}\Bigr].
\end{equation}
Similarly, for any $y$ satisfying $\tilde y_i - \varepsilon \leq y \leq y_i(\lambda')$, we have
\begin{equation}\label{you}
\frac{\kappa_i}{\tilde k_i}\Bigl[(y-\tilde y_i)^{\tilde k_i} - (y_i(\lambda')-\tilde y_i)^{\tilde k_i}\Bigr]
\;\geq\; v(y) - \lambda'
\;\geq\; \frac{\kappa'_i}{\tilde k_i}\Bigl[(y-\tilde y_i)^{\tilde k_i} - (y_i(\lambda')-\tilde y_i)^{\tilde k_i}\Bigr].
\end{equation}
\quad Combining \eqref{zuo} and \eqref{you}, we deduce that for any 
$y \in [\tilde y_i - \varepsilon,\ \tilde y_i + \varepsilon]$,  
\begin{equation}\label{jdz}
\frac{\kappa_i}{\tilde k_i}
\left| (y - \tilde y_i)^{\tilde k_i} - (y_i(\lambda') - \tilde y_i)^{\tilde k_i} \right|
\;\leq\; |v(y) - \lambda'|
\;\leq\; \frac{\kappa'_i}{\tilde k_i}
\left| (y - \tilde y_i)^{\tilde k_i} - (y_i(\lambda') - \tilde y_i)^{\tilde k_i} \right|.
\end{equation}
Note that $\delta \in (0,\hat\delta_i(\lambda)] \subset (0,1)$.  
Thus, when $y \in E^{m,i}_{\lambda',\delta}$ we have $|v(y) - \lambda'| < \delta^m$.  
Using \eqref{jdz}, it follows that
\begin{equation}\label{zj}
\frac{\kappa_i}{\tilde k_i}
\left| (y - \tilde y_i)^{\tilde k_i} - (y_i(\lambda') - \tilde y_i)^{\tilde k_i} \right|
\;\leq\; |v(y) - \lambda'|
\;<\; \delta^m \;\leq\; \delta^{\tilde k_i}.
\end{equation}

From \eqref{zy} we know that $y_i(\lambda') - \tilde y_i > 0$.  Take any $y \in E^{m,i}_{\lambda',\delta}$.  
By \eqref{dddz} we also have $y - \tilde y_i > 0$. Then
\begin{align*}
&|y - y_i(\lambda')|
= \bigl|(y - \tilde y_i) - (y_i(\lambda') - \tilde y_i)\bigr| \\
=& \frac{\bigl|(y - \tilde y_i)^{\tilde k_i} - (y_i(\lambda') - \tilde y_i)^{\tilde k_i}\bigr|}
{(y - \tilde y_i)^{\tilde k_i - 1} + (y - \tilde y_i)^{\tilde k_i - 2}(y_i(\lambda') - \tilde y_i) + \cdots + (y_i(\lambda') - \tilde y_i)^{\tilde k_i - 1}} \\
\leq& \frac{\bigl|(y - \tilde y_i)^{\tilde k_i} - (y_i(\lambda') - \tilde y_i)^{\tilde k_i}\bigr|}
{(y_i(\lambda') - \tilde y_i)^{\tilde k_i - 1}} \overset{\eqref{zj}}{<} \frac{\tilde k_i\,\delta^{\tilde k_i}}
{\kappa_i (y_i(\lambda') - \tilde y_i)^{\tilde k_i - 1}} 
\overset{\eqref{lb}}{\leq} \frac{\tilde k_i\,\delta^{\tilde k_i}}
{\kappa_i \left(\frac{\tilde k_i(\lambda' - \lambda)}{\kappa'_i}\right)^{(\tilde k_i - 1)/\tilde k_i}} \\
\overset{\eqref{EE}}{\leq}& \frac{\tilde k_i}{\kappa_i}
\cdot \frac{\delta^{\tilde k_i}}
{\left(\tfrac{\tilde k_i \delta^{\tilde k_i}}{\kappa'_i}\right)^{1 - 1/\tilde k_i}} = \frac{\tilde k_i}{\kappa_i}
\left(\frac{\kappa'_i}{\tilde k_i}\right)^{1 - 1/\tilde k_i} \delta,
\end{align*}
that is,
\begin{equation}\label{cjc}
|y - y_i(\lambda')|
< \frac{\tilde k_i}{\kappa_i}
\left(\frac{\kappa'_i}{\tilde k_i}\right)^{1 - 1/\tilde k_i} \,\delta.
\end{equation}
Therefore,
\begin{align}\label{jia}
\begin{split}
E^{m,i}_{\lambda',\delta}
&\subset \left(
y_i(\lambda') - \frac{\tilde k_i}{\kappa_i}
\left(\frac{\kappa'_i}{\tilde k_i}\right)^{1 - 1/\tilde k_i}\delta,\,
y_i(\lambda') + \frac{\tilde k_i}{\kappa_i}
\left(\frac{\kappa'_i}{\tilde k_i}\right)^{1 - 1/\tilde k_i}\delta
\right).
\end{split}
\end{align}
Hence,
\begin{equation}\label{cedu}
m(E^{m,i}_{\lambda',\delta})
\;\leq\; 2 \cdot \frac{\tilde k_i}{\kappa_i}
\left(\frac{\kappa'_i}{\tilde k_i}\right)^{1 - 1/\tilde k_i}\delta.
\end{equation}

\quad The above analysis was carried out for Case b.1 in \eqref{wu},  
but by a similar argument, both \eqref{jia} and \eqref{cedu} remain valid  
in Case b.5 as well.
\item[\textbf{(A.2)}] \textbf{Cases b.2 and b.4.} For Case b.2 in \eqref{wu}, namely when $0 < \lambda' - \lambda \leq \delta^{\tilde k_i}$,  
for any $y \in E^{m,i}_{\lambda',\delta}$ we have
\begin{align*}
|y - \tilde y_i|^{\tilde k_i}
&\leq \bigl||y - \tilde y_i|^{\tilde k_i} - |y_i(\lambda') - \tilde y_i|^{\tilde k_i}\bigr|
    + |y_i(\lambda') - \tilde y_i|^{\tilde k_i} \\
&\leq \bigl|(y - \tilde y_i)^{\tilde k_i} - (y_i(\lambda') - \tilde y_i)^{\tilde k_i}\bigr|
    + |y_i(\lambda') - \tilde y_i|^{\tilde k_i} \\
&\overset{\eqref{jdz}}{\leq} \frac{\tilde k_i}{\kappa_i}|v(y) - \lambda'|
    + |y_i(\lambda') - \tilde y_i|^{\tilde k_i} \overset{\eqref{zj}}{<} \frac{\tilde k_i}{\kappa_i}\,\delta^{\tilde k_i}
    + |y_i(\lambda') - \tilde y_i|^{\tilde k_i} \\
&\overset{\eqref{lb}}{\leq} \frac{\tilde k_i}{\kappa_i}\,\delta^{\tilde k_i}
    + \frac{\tilde k_i(\lambda' - \lambda)}{\kappa_i} \overset{\eqref{rq}}{<} \frac{\tilde k_i}{\kappa_i}\,\delta^{\tilde k_i}
    + \frac{\tilde k_i}{\kappa_i}\,\delta^{\tilde k_i} = \frac{2\tilde k_i}{\kappa_i}\,\delta^{\tilde k_i}.
\end{align*}
Hence,
\[
|y - \tilde y_i| \leq \left(\frac{2\tilde k_i}{\kappa_i}\right)^{1/\tilde k_i}\delta.
\]

Therefore, in Case b.2 of \eqref{wu}, we obtain
\begin{equation}\label{bc}
E^{m,i}_{\lambda',\delta} \;\subset\;
\left(\tilde y_i - \left(\tfrac{2\tilde k_i}{\kappa_i}\right)^{1/\tilde k_i}\delta,\;
      \tilde y_i + \left(\tfrac{2\tilde k_i}{\kappa_i}\right)^{1/\tilde k_i}\delta\right),
\end{equation}
and consequently
\begin{equation}\label{de}
m(E^{m,i}_{\lambda',\delta})
\;\leq\; 2\left(\tfrac{2\tilde k_i}{\kappa_i}\right)^{1/\tilde k_i}\delta.
\end{equation}

\quad By a similar argument, the estimates \eqref{bc}–\eqref{de} also hold in Case b.4.

\item[\textbf{(A.3)}] \textbf{Case b.3.} In particular, for Case b.3 in \eqref{wu}, namely when $\lambda' = \lambda$,  
we have $\tilde y_i = y_i(\lambda')$.  
For any $y \in E^{m,i}_{\lambda',\delta} \subset [\tilde y_i - \varepsilon,\ \tilde y_i + \varepsilon]$, it follows that
\[
\frac{\kappa_i}{\tilde k_i}\,|y - \tilde y_i|^{\tilde k_i}
\;\overset{\eqref{jdz}}{\leq}\; |v(y) - \lambda|
\;<\; \delta^m \;\leq\; \delta^{\tilde k_i},
\]
hence
\begin{equation}\label{bhy}
y \in \left(\tilde y_i - \left(\tfrac{\tilde k_i}{\kappa_i}\right)^{1/\tilde k_i}\delta,\;
           \tilde y_i + \left(\tfrac{\tilde k_i}{\kappa_i}\right)^{1/\tilde k_i}\delta\right).
\end{equation}
Therefore,
\begin{equation}\label{bhyy}
m(E^{m,i}_{\lambda',\delta})
\;\leq\; 2\left(\tfrac{\tilde k_i}{\kappa_i}\right)^{1/\tilde k_i}\delta
\;\leq\; 2\left(\tfrac{2\tilde k_i}{\kappa_i}\right)^{1/\tilde k_i}\delta.
\end{equation}
\end{itemize}

Combining all the cases discussed above, we define
\begin{align}\label{cde}
\begin{split}
C_i(\lambda) 
:= \max\left\{
2\frac{\tilde k_i}{\kappa_i}\left(\frac{\kappa'_i}{\tilde k_i}\right)^{1 - 1/\tilde k_i},\,
2\left(\frac{2\tilde k_i}{\kappa_i}\right)^{1/\tilde k_i}
\right\}.
\end{split}
\end{align}
By \eqref{dl}, we have $\hat\delta_i(\lambda) \in (0,1)$.  Combining \eqref{jia}, \eqref{cedu}, \eqref{bc}, \eqref{de}, \eqref{bhy}, and \eqref{bhyy},  
we conclude that for any $\lambda' \in (\lambda - (\hat\delta_i(\lambda))^m,\ 
\lambda + (\hat\delta_i(\lambda))^m)$ and any $\delta \in (0,\hat\delta_i(\lambda)]$,  
there exists an interval $I_i(\lambda,\delta)$ of length at most $C_i(\lambda)\delta$ such that
\begin{equation}\label{cf}
E^{m,i}_{\lambda',\delta} \subset I_i(\lambda,\delta),
\qquad
m(E^{m,i}_{\lambda',\delta}) \leq m(I_i(\lambda,\delta)) \leq C_i(\lambda)\delta.
\end{equation}

\textit{The above proof was carried out under the assumptions of Case a.4 in \eqref{si},  
but a similar argument shows that \eqref{cf} also holds in the other three cases.  Moreover, the analysis above was derived under the assumption that $v^{(\tilde k_i)}(\tilde y_i) > 0$,  
but by a completely analogous argument one can show that \eqref{cf} remains valid  
when $v^{(\tilde k_i)}(\tilde y_i) < 0$.}

\item[\textbf{(B)}] \textbf{ If $\tilde k_i$ is even, then $\tilde k_i - 1$ is odd.}  By the same integration argument as in the proof of \eqref{yy} for the case where $\tilde k_i$ is odd,  
we obtain
\begin{align}\label{yyd}
\begin{split}
\forall\, y \in [\tilde y_i,\ \tilde y_i + \varepsilon], 
&\quad \kappa_i (y - \tilde y_i)^{\tilde k_i - 1} 
   \;\leq\; v'(y) \;\leq\; \kappa'_i (y - \tilde y_i)^{\tilde k_i - 1}, \\[0.5ex]
\forall\, y \in [\tilde y_i - \varepsilon,\ \tilde y_i], 
&\quad \kappa_i (y - \tilde y_i)^{\tilde k_i - 1} 
   \;\geq\; v'(y) \;\geq\; \kappa'_i (y - \tilde y_i)^{\tilde k_i - 1}.
\end{split}
\end{align}
Here, the definitions of $\kappa_i$ and $\kappa'_i$ are the same as in \eqref{k}.

It follows that on the interval $[\tilde y_1' - \varepsilon, \tilde y_1')$ we have $v'(y) < 0$, while on $(\tilde y_1', \tilde y_1' + \varepsilon]$ we have $v'(y) > 0$. Since $v$ is continuous at $y = \tilde y_i$, it is strictly decreasing on $[\tilde y_1 - \varepsilon, \tilde y_1]$ and strictly increasing on $[\tilde y_1, \tilde y_1 + \varepsilon]$. Therefore, on the interval $[\tilde y_i - \varepsilon, \tilde y_i + \varepsilon]$ the function $v$ attains its minimum at $y = \tilde y_i$ and its maximum at either $y = \tilde y_i - \varepsilon$ or $y = \tilde y_i + \varepsilon$.

Based on the above analysis of the monotonicity of $v$, we obtain
\begin{align}\label{xd}
	\begin{split}
	v(\tilde y_i - \varepsilon) > v(\tilde y_i) = \lambda, \ \text{hence } v(\tilde y_i - \varepsilon) - \lambda > 0; \\
	v(\tilde y_i + \varepsilon) > v(\tilde y_i) = \lambda, \ \text{hence } v(\tilde y_i + \varepsilon) - \lambda > 0.
	\end{split}
\end{align}
Let
\[
\eta_i'' = \min\left\{\, v(\tilde y_i - \varepsilon) - \lambda, \; v(\tilde y_i + \varepsilon) - \lambda \,\right\}.
\]
By \eqref{xd}, we have $\eta_i'' > 0$. Define
\[
\hat{\delta}_i(\lambda) = \min\left\{\, \tilde\delta_0(\lambda), \; \bigl(\eta_i''\bigr)^{1/m} \,\right\},
\]
then by \eqref{dl}, $\hat{\delta}_i(\lambda) \in (0,1)$.

From the definition of $E^{m,i}_{\lambda',\delta}$ in \eqref{jj}, we have
\begin{align}\label{fg}
	\begin{split}
	E^{m,i}_{\lambda',\delta}
	&= E^{m,i}_{\lambda',\delta} \cap (\tilde y_i - \varepsilon, \tilde y_i + \varepsilon) \\
	&= \bigl(E^{m,i}_{\lambda',\delta} \cap (\tilde y_i - \varepsilon, \tilde y_i] \bigr) 
	    \cup \bigl(E^{m,i}_{\lambda',\delta} \cap (\tilde y_i, \tilde y_i + \varepsilon)\bigr) =: E^{m,i,-}_{\lambda',\delta} \cup E^{m,i,+}_{\lambda',\delta}.
	\end{split}
\end{align}
Noting that $E^{m,i,-}_{\lambda',\delta} \cap E^{m,i,+}_{\lambda',\delta} = \varnothing$, it follows that
\begin{equation}\label{jfa}
	m(E^{m,i}_{\lambda',\delta}) 
	= m(E^{m,i,-}_{\lambda',\delta}) + m(E^{m,i,+}_{\lambda',\delta}).
\end{equation}

For any $\lambda' \in \bigl(\lambda - (\hat{\delta}_i(\lambda))^m, \, \lambda + (\hat{\delta}_i(\lambda))^m \bigr)$ and any $\delta \in (0, \hat{\delta}_i(\lambda)]$,  
there are the following three cases:
\begin{align}\label{san}
	\begin{split}
&\text{Case c.1:}\quad \lambda + \delta^{\tilde k_i} \leq \lambda' < \lambda + (\hat{\delta}_i(\lambda))^m;\\
&\text{Case c.2:}\quad \lambda - \min\{\delta^{\tilde k_i}, (\hat{\delta}_i(\lambda))^m\} < \lambda' 
                   < \lambda + \min\{\delta^{\tilde k_i}, (\hat{\delta}_i(\lambda))^m\};\\
&\text{Case c.3:}\quad \lambda - (\hat{\delta}_i(\lambda))^m < \lambda' \leq \lambda + \delta^{\tilde k_i}.
	\end{split}
\end{align}
It is worth mentioning that when $\delta^{\tilde k_i} \geq (\hat{\delta}_i(\lambda))^m$, 
the values of $\lambda'$ satisfying Case~c.1 and Case~c.3 do not actually exist.

\begin{itemize}
    \item[\textbf{(B.1)}]\textbf{Case~c.1.} For Case~c.1, namely when $\lambda' \geq \lambda + \delta^{\tilde k_i} > \lambda$, so that
    \begin{equation}\label{yy}
    \la'-\la\geq\de^{\tilde{k}_i}.
    \end{equation}   
The monotonicity analysis of $v$ implies that the equation $v(y) = \lambda'$ 
has exactly two solutions in the interval $[\tilde y_i - \varepsilon, \tilde y_i + \varepsilon]$, 
denoted by $y_i^-(\lambda')$ and $y_i^+(\lambda')$, which satisfy
\begin{equation}\label{dxgx}
\tilde y_i - \varepsilon < y_i^-(\lambda') < \tilde y_i < y_i^+(\lambda') < \tilde y_i + \varepsilon.
\end{equation}

Same as the proof of \eqref{lb}and\eqref{zj}, we can also get\begin{align}\label{lbb}
\begin{split}
\left(\frac{\tilde k_i(\lambda' - \lambda)}{\kappa'_i}\right)^{1/\tilde k_i}
\;\leq\; y_i^+(\lambda') - \tilde y_i
\;\leq\; \left(\frac{\tilde k_i(\lambda' - \lambda)}{\kappa_i}\right)^{1/\tilde k_i},\\\left(\frac{\tilde k_i(\lambda' - \lambda)}{\kappa'_i}\right)^{1/\tilde k_i}
\;\leq\; \tilde y_i-y_i^-(\lambda') 
\;\leq\; \left(\frac{\tilde k_i(\lambda' - \lambda)}{\kappa_i}\right)^{1/\tilde k_i},
\end{split}
\end{align}
and
\begin{align}\label{lg}
\begin{split}
\forall\, y\in E^{m,i,+}_{\la',\de},\ \frac{\kappa_i}{\tilde k_i}
\left| (y - \tilde y_i)^{\tilde k_i} - (y_i^+(\lambda') - \tilde y_i)^{\tilde k_i} \right|
\;\leq\; |v(y) - \lambda'|
\;<\; \delta^m \;\leq\; \delta^{\tilde k_i}.\\
\forall\, y\in E^{m,i,-}_{\la',\de},\ \frac{\kappa_i}{\tilde k_i}
\left| (y - \tilde y_i)^{\tilde k_i} - (y_i^-(\lambda') - \tilde y_i)^{\tilde k_i} \right|
\;\leq\; |v(y) - \lambda'|
\;<\; \delta^m \;\leq\; \delta^{\tilde k_i}.
\end{split}
\end{align}

Then, for any $y \in E^{m,i,+}_{\lambda',\delta}$, we have $y > \tilde y_i$ and $y \in E^{m,i}_{\lambda',\delta}$. Then
\begin{align*}
		&|y - y_i^+(\lambda')| \\
		=& |(y - \tilde y_i) - (y_i^+(\lambda') - \tilde y_i)| \\
		=& \frac{|(y - \tilde y_i)^{\tilde k_i} - (y_i^+(\lambda') - \tilde y_i)^{\tilde k_i}|}
		       {(y - \tilde y_i)^{\tilde k_i - 1} + (y - \tilde y_i)^{\tilde k_i - 2}(y_i^+(\lambda') - \tilde y_i) + \ldots + (y_i^+(\lambda') - \tilde y_i)^{\tilde k_i - 1}} \\
		\leq& \frac{|(y - \tilde y_i)^{\tilde k_i} - (y_i^+(\lambda') - \tilde y_i)^{\tilde k_i}|}
		       {(y_i^+(\lambda') - \tilde y_i)^{\tilde k_i - 1}} 
		\overset{(\ref{lg})}{<} \frac{\tilde k_i \delta^{\tilde k_i}}
		       {\kappa_i (y_i^+(\lambda') - \tilde y_i)^{\tilde k_i - 1}} \\
		\overset{(\ref{lbb})}{\leq}& \frac{\tilde k_i \delta^{\tilde k_i}}
		       {\kappa_i \bigl(\tfrac{\tilde k_i (\lambda' - \lambda)}{\kappa_i'}\bigr)^{(\tilde k_i - 1)/\tilde k_i}} 
		\overset{(\ref{yy})}{\leq} \frac{\tilde k_i}{\kappa_i} \cdot
		       \frac{\delta^{\tilde k_i}}{\bigl(\tfrac{\tilde k_i \delta^{\tilde k_i}}{\kappa_i'}\bigr)^{1 - 1/\tilde k_i}} 
		= \frac{\tilde k_i}{\kappa_i} \Bigl(\frac{\kappa_i'}{\tilde k_i}\Bigr)^{1 - 1/\tilde k_i} \delta.
\end{align*}

Therefore,
\begin{equation}\label{xg}
	E^{m,i,+}_{\lambda',\delta} \subset 
	\Bigl(y_i^+(\lambda') - \frac{\tilde k_i}{\kappa_i}\Bigl(\frac{\kappa_i'}{\tilde k_i}\Bigr)^{1 - 1/\tilde k_i} \delta, \;
	      y_i^+(\lambda') + \frac{\tilde k_i}{\kappa_i}\Bigl(\frac{\kappa_i'}{\tilde k_i}\Bigr)^{1 - 1/\tilde k_i} \delta \Bigr),
\end{equation}
and hence
\begin{equation}\label{jyb}
	m(E^{m,i,+}_{\lambda',\delta}) 
	\leq 2 \frac{\tilde k_i}{\kappa_i}\Bigl(\frac{\kappa_i'}{\tilde k_i}\Bigr)^{1 - 1/\tilde k_i} \delta.
\end{equation}

\quad Similarly,
\begin{equation}\label{xgg}
	E^{m,i,-}_{\lambda',\delta} \subset 
	\Bigl(y_i^-(\lambda') - \frac{\tilde k_i}{\kappa_i}\Bigl(\frac{\kappa_i'}{\tilde k_i}\Bigr)^{1 - 1/\tilde k_i}\delta,\;
	      y_i^-(\lambda') + \frac{\tilde k_i}{\kappa_i}\Bigl(\frac{\kappa_i'}{\tilde k_i}\Bigr)^{1 - 1/\tilde k_i}\delta\Bigr),
\end{equation}
and hence
\begin{equation}\label{jybb}
	m(E^{m,i,-}_{\lambda',\delta}) 
	\leq 2 \frac{\tilde k_i}{\kappa_i}\Bigl(\frac{\kappa_i'}{\tilde k_i}\Bigr)^{1 - 1/\tilde k_i}\delta.
\end{equation}
Combining \eqref{jfa}, \eqref{jyb}, and \eqref{jybb}, we obtain
\begin{equation}\label{yyc}
	m(E^{m,i}_{\lambda',\delta})
	= m(E^{m,i,-}_{\lambda',\delta}) + m(E^{m,i,+}_{\lambda',\delta})
	\leq 4 \frac{\tilde k_i}{\kappa_i}\Bigl(\frac{\kappa_i'}{\tilde k_i}\Bigr)^{1 - 1/\tilde k_i}\delta.
\end{equation}
\item[\textbf{(B.2)}]\textbf{Case~c.2.} For Case~c.2, we have $\lambda' \in (\lambda - \delta^{\tilde k_i}, \lambda + \delta^{\tilde k_i})$.  
Then the equation in $y$,
\[
v(y) = \lambda' + \min\{\delta^{\tilde k_i}, (\hat{\delta}_i(\lambda))^m\},
\]
has exactly two solutions in $[\tilde y_i - \varepsilon, \tilde y_i + \varepsilon]$, denoted by $\hat y_i^-(\lambda')$ and $\hat y_i^+(\lambda')$, which satisfy
\begin{equation}\label{syu}
L_1 < \tilde y_i - \varepsilon \leq \hat y_i^-(\lambda') < \tilde y_i < \hat y_i^+(\lambda') \leq \tilde y_i + \varepsilon < L_2.
\end{equation}

By the monotonicity of $v$, we deduce that
\begin{equation}\label{zjd}
	E^{m,i,-}_{\lambda',\delta} = (\hat y_i^-(\lambda'), \tilde y_i], 
	\quad 
	E^{m,i,+}_{\lambda',\delta} = (\tilde y_i, \hat y_i^+(\lambda')).
\end{equation}

\quad For any $y \in E^{m,i,+}_{\lambda',\delta} \subset (\tilde y_i - \varepsilon_i, \tilde y_i + \varepsilon_i)\cap[L_1,L_2]$,  
\eqref{vki} holds, namely
\begin{equation*}
	|v^{(\tilde k_i)}(y) - v^{(\tilde k_i)}(\tilde y_i)| 
	< \frac{|v^{(\tilde k_i)}(\tilde y_i)|}{2} 
	= \frac{v^{(\tilde k_i)}(\tilde y_i)}{2},
\end{equation*}
which implies
\begin{equation}\label{jf}
	\frac{v^{(\tilde k_i)}(\tilde y_i)}{2} 
	< v^{(\tilde k_i)}(y) 
	< \frac{3 v^{(\tilde k_i)}(\tilde y_i)}{2}.
\end{equation}

Integrating \eqref{jf} over $(\tilde y_i,y]$, we obtain
\[
\int_{\tilde y_i}^y \frac{v^{(\tilde k_i)}(\tilde y_i)}{2}\,dz
\;\leq\; \int_{\tilde y_i}^y v^{(\tilde k_i)}(z)\,dz
\;\leq\; \int_{\tilde y_i}^y \frac{3 v^{(\tilde k_i)}(\tilde y_i)}{2}\,dz,
\]
and since $v^{(\tilde k_i - 1)}(\tilde y_i) = 0$, it follows that
\begin{equation}\label{jff}
	\frac{v^{(\tilde k_i)}(\tilde y_i)}{2}(y - \tilde y_i)
	\;\leq\; v^{(\tilde k_i - 1)}(y)
	\;\leq\; \frac{3 v^{(\tilde k_i)}(\tilde y_i)}{2}(y - \tilde y_i).
\end{equation}

Integrating \eqref{jff} over $(\tilde y_i,y]$, we obtain
\[
\int_{\tilde y_i}^y \frac{v^{(\tilde k_i)}(\tilde y_i)}{2}(z - \tilde y_i)\,dz
\;\leq\; \int_{\tilde y_i}^y v^{(\tilde k_i - 1)}(z)\,dz
\;\leq\; \int_{\tilde y_i}^y \frac{3 v^{(\tilde k_i)}(\tilde y_i)}{2}(z - \tilde y_i)\,dz.
\]

Using again that $v^{(\tilde k_i - 2)}(\tilde y_i) = 0$, we deduce
\[
\frac{v^{(\tilde k_i)}(\tilde y_i)}{4}(y - \tilde y_i)^2
\;\leq\; v^{(\tilde k_i - 2)}(y)
\;\leq\; \frac{3 v^{(\tilde k_i)}(\tilde y_i)}{4}(y - \tilde y_i)^2.
\]
Proceeding in this way, we finally obtain
\[
\frac{v^{(\tilde k_i)}(\tilde y_i)}{2(\tilde k_i - 1)!}(y - \tilde y_i)^{\tilde k_i - 1}
\;\leq\; v'(y) \;\leq\;
\frac{3 v^{(\tilde k_i)}(\tilde y_i)}{2(\tilde k_i - 1)!}(y - \tilde y_i)^{\tilde k_i - 1},
\]
which yields
\begin{equation}\label{kaa}
	\kappa_i (y - \tilde y_i)^{\tilde k_i - 1}
	\;\leq\; v'(y) 
	\;\leq\; \kappa_i'(y - \tilde y_i)^{\tilde k_i - 1}.
\end{equation}
Here, $\kappa_i$ and $\kappa_i'$ are the same as in the odd case, namely those given in \eqref{k}.

\quad Integrating \eqref{kaa} over $(\tilde y_i, y]$, we obtain
\begin{equation}\label{jfff}
	\int_{\tilde y_i}^{y} \kappa_i (z - \tilde y_i)^{\tilde k_i - 1}\,dz
	\;\leq\; \int_{\tilde y_i}^{y} v'(z)\,dz
	\;\leq\; \int_{\tilde y_i}^{y} \kappa_i' (z - \tilde y_i)^{\tilde k_i - 1}\,dz.
\end{equation}
Noting that $v(\tilde y_i) = \lambda$, it follows from \eqref{jfff} that
\begin{equation}\label{gj}
	0 \;\leq\; \frac{\kappa_i}{\tilde k_i}(\hat y_i^+(\lambda')- \tilde y_i)^{\tilde k_i}
	\;\leq\; v(y) - \lambda
	\;\leq\; \frac{\kappa_i'}{\tilde k_i}(\hat y_i^+(\lambda')- \tilde y_i)^{\tilde k_i}.
\end{equation}
Moreover, since $y \in E^{m,i,+}_{\lambda',\delta} \subset E^{m,i}_{\lambda',\delta}$, then $0 < v(y) - \lambda < \delta^m \leq \delta^{\tilde k_i}$. Combining this with \eqref{gj}, we obtain
\begin{equation}\label{zzz}
	\hat y_i^+(\lambda')- \tilde y_i 
	\;\leq\; \Bigl(\tfrac{\tilde k_i}{\kappa_i}\,(v(y)-\lambda)\Bigr)^{1/\tilde k_i}
	\;<\; \Bigl(\tfrac{\tilde k_i}{\kappa_i}\Bigr)^{1/\tilde k_i}\delta.
\end{equation}
According to \eqref{zjd}, 
\begin{equation}\label{xj}
	m(E^{m,i,+}_{\lambda',\delta}) = \hat y_i^+(\lambda') - \tilde y_i 
	\;\overset{(\ref{zzz})}{<}\; \Bigl(\tfrac{\tilde k_i}{\kappa_i}\Bigr)^{1/\tilde k_i}\delta.
\end{equation}

\quad Similarly,
\begin{equation}\label{fff}
	\tilde y_i - \hat y_i^-(\lambda') < \Bigl(\tfrac{\tilde k_i}{\kappa_i}\Bigr)^{1/\tilde k_i}\delta,
\end{equation}
and hence
\begin{equation}\label{ffff}
	m(E^{m,i,-}_{\lambda',\delta}) 
	< \Bigl(\tfrac{\tilde k_i}{\kappa_i}\Bigr)^{1/\tilde k_i}\delta.
\end{equation}

Combining \eqref{jfa}, we conclude that for 
$\lambda' \in (\lambda - \delta^{\tilde k_i}, \lambda + \delta^{\tilde k_i})$,
\begin{equation}\label{qhh}
	m(E^{m,i}_{\lambda',\delta}) 
	= m(E^{m,i,-}_{\lambda',\delta}) + m(E^{m,i,+}_{\lambda',\delta})
	< 2\Bigl(\tfrac{\tilde k_i}{\kappa_i}\Bigr)^{1/\tilde k_i}\delta.
\end{equation}

\item[\textbf{(B.3)}]\textbf{Case~c.3.} Finally, for Case~c.3, when $\lambda' \leq \lambda - \delta^{\tilde k_i}$, 
the monotonicity of $v$ implies
\begin{equation}\label{kj}
	E^{m,i,-}_{\lambda',\delta} = E^{m,i,+}_{\lambda',\delta} = \varnothing,
\end{equation}
and therefore
\begin{equation}\label{ling}
	m(E^{m,i,-}_{\lambda',\delta}) = m(E^{m,i,+}_{\lambda',\delta}) = 0.
\end{equation}
\end{itemize}

As in the case where $\tilde k_i$ is odd, define
\[
C_i(\lambda) := \max\left\{\, 
  4\Bigl(\tfrac{\tilde k_i}{\kappa_i}\Bigr)\Bigl(\tfrac{\kappa_i'}{\tilde k_i}\Bigr)^{1 - 1/\tilde k_i},\;
  2\Bigl(\tfrac{2\tilde k_i}{\kappa_i}\Bigr)^{1/\tilde k_i}
  \right\}.
\]
Combining \eqref{jfa}, \eqref{xg}, \eqref{jyb}, \eqref{xgg}, \eqref{jybb}, 
\eqref{yyc}, \eqref{zjd}, \eqref{xj}, \eqref{ffff}, \eqref{qhh}, 
\eqref{kj}, and \eqref{ling}, we conclude that for any 
$\delta \in (0, \hat{\delta}_i(\lambda)]$, there exist intervals 
$J_i(\lambda,\delta)$ and $K_i(\lambda,\delta)$ 
(here the empty set is also regarded as a special case of an interval) such that
\begin{align}\label{kl}
	\begin{split}
	&E^{m,i}_{\lambda',\delta} \subset J_i(\lambda,\delta) \cup K_i(\lambda,\delta),\\
	&m(J_i(\lambda,\delta)) \leq \tfrac{C_i(\lambda)}{2}\,\delta,\quad m(K_i(\lambda,\delta)) \leq \tfrac{C_i(\lambda)}{2}\,\delta,\\
	&m(E^{m,i}_{\lambda',\delta}) \leq m(J_i(\lambda,\delta)) + m(K_i(\lambda,\delta))\leq C_i(\lambda)\delta.
	\end{split}
\end{align}

\textit{The above proof was given under the assumptions 
$\tilde y_1 > L_1$ and $\tilde y_{N(\lambda)} < L_2$, 
which correspond to Case~a.4 in \eqref{si}.  
For the other three cases, the same argument shows that \eqref{kl} still holds. Moreover, the proof was carried out under the assumption 
$v^{(\tilde k_i)}(\tilde y_i) > 0$.  
However, when $v^{(\tilde k_i)}(\tilde y_i) < 0$, 
\eqref{kl} remains valid as well.}
\end{itemize}

Combining the result for odd $\tilde k_i$ in \eqref{cf} with the result for even $\tilde k_i$ in \eqref{kl}, we obtain 
$\forall i\in\{1,\ldots,N(\lambda)\},\ \exists\, C_i(\lambda) > 0, \ \hat{\delta}_i(\lambda) > 0, \ 
\forall\, \lambda' \in \bigl(\lambda - (\hat{\delta}_i(\lambda))^m, \, \lambda + (\hat{\delta}_i(\lambda))^m \bigr), 
\forall\, \delta \in (0,\hat{\delta}_i(\lambda)],$
\begin{equation}\label{jl}
	m(E^{m,i}_{\lambda',\delta}) \leq C_i(\lambda)\,\delta.
\end{equation}

Define 
\begin{align}
\label{d}
	&\delta_0(\lambda) :=\min\left\{\, \hat{\delta}_1(\lambda), \ldots, \hat{\delta}_{N(\lambda)}(\lambda) \,\right\},\\
  \label{qiuhe}  &C(\lambda) :=\sum_{i=1}^{N(\lambda)} C_i(\lambda).
\end{align}
 Then
\[
	m(E^m_{\lambda',\delta}) 
	\overset{(\ref{qh})}{=} \sum_{i=1}^{N(\lambda)} m(E^{m,i}_{\lambda',\delta})
	\overset{\eqref{kl}}{\leq} \sum_{i=1}^{N(\lambda)} C_i(\lambda)\,\delta
	\overset{\eqref{d}}{=} C(\lambda)\,\delta.
\]
The proof of the lemma is thus complete.
\end{proof}

Based on the result of Lemma \ref{prior estimate for E(lambda)}, we are able to obtain the uniform boundedness of $m(E^m_{\lambda,\delta})$ with respect to $\lambda$.

\begin{lemma}\label{|E(lambda)|}
Let $d=1$, $m \geq 1$, and $L_1, L_2$ are arbitrary real numbers with $L_1 < L_2$. Suppose $v:[L_1,L_2] \to \mathbb{R}$ is a $C^m$ function satisfying the $m$-th order non-degeneracy condition
\[
 |v'(y)| + |v''(y)| + \cdots + |v^{(m)}(y)| > 0,\quad \forall\, y \in [L_1,L_2].
\]
Then there exist constants $C_0 > 0$ and $\hat{\delta}_0 \in (0,1)$ such that, 
for any $\delta \in (0,\hat{\delta}_0]$ and any $\lambda \in \mathbb{R}$, we have
\[
 m(E^m_{\lambda,\delta})\leq C_0 \delta .
\]
\end{lemma}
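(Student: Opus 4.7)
The plan is to upgrade the local-in-$\lambda$ estimate of Lemma~\ref{prior estimate for E(lambda)} to a uniform one by invoking compactness of the range of $v$, and then to handle values of $\lambda$ lying outside that range via a simple inclusion.

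Since $v$ is continuous on the compact interval $[L_1,L_2]$, its image is the compact interval $[v_1,v_2]$ with $v_1<v_2$ (strict, since otherwise $v$ would be constant, contradicting the $m$-th order non-degeneracy condition). Lemma~\ref{prior estimate for E(lambda)} provides, for each $\mu\in[v_1,v_2]$, positive constants $\delta_0(\mu)$ and $C(\mu)$ such that $m(E^m_{\lambda,\delta})\le C(\mu)\,\delta$ for every $\lambda\in(\mu-\delta_0(\mu)^m,\,\mu+\delta_0(\mu)^m)$ and every $\delta\in(0,\delta_0(\mu)]$. These open intervals, as $\mu$ ranges over $[v_1,v_2]$, form an open cover of that compact set, so I would extract a finite subcover centered at points $\mu_1,\dots,\mu_J\in[v_1,v_2]$ and define
\[
\hat\delta_0 \,:=\, \min\{1,\delta_0(\mu_1),\dots,\delta_0(\mu_J)\}, \qquad C_0 \,:=\, \max\{C(\mu_1),\dots,C(\mu_J)\}.
\]
For any $\lambda\in[v_1,v_2]$ and any $\delta\in(0,\hat\delta_0]$, the point $\lambda$ belongs to some $(\mu_j-\delta_0(\mu_j)^m,\,\mu_j+\delta_0(\mu_j)^m)$ and satisfies $\delta\le\delta_0(\mu_j)$, so Lemma~\ref{prior estimate for E(lambda)} yields $m(E^m_{\lambda,\delta})\le C(\mu_j)\,\delta\le C_0\,\delta$.

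It remains to deal with $\lambda\notin[v_1,v_2]$. The key observation is that if $\lambda>v_2$, then every $y\in E^m_{\lambda,\delta}$ satisfies $\lambda-\delta^m<v(y)\le v_2$, whence $|v(y)-v_2|<\delta^m$ and $y\in E^m_{v_2,\delta}$; symmetrically, if $\lambda<v_1$, then $E^m_{\lambda,\delta}\subset E^m_{v_1,\delta}$. Since $v_1$ and $v_2$ both lie in $[v_1,v_2]$, the previous step already controls $m(E^m_{v_1,\delta})$ and $m(E^m_{v_2,\delta})$ by $C_0\,\delta$, and the estimate $m(E^m_{\lambda,\delta})\le C_0\,\delta$ extends to all $\lambda\in\R$ and $\delta\in(0,\hat\delta_0]$.

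The substantial analytic content---namely, the Taylor-expansion-based description of $E^m_{\lambda,\delta}$ near each preimage of $\lambda$---has already been carried out inside Lemma~\ref{prior estimate for E(lambda)}, so the present lemma is essentially compactness packaging and I do not anticipate any real obstacle. The only mild subtlety is handling $\lambda$ outside $[v_1,v_2]$ without a separate local analysis at the endpoints, and this is precisely what the one-sided inclusion $E^m_{\lambda,\delta}\subset E^m_{v_j,\delta}$ above is designed to bypass.
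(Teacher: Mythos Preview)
Your proof is correct and follows essentially the same compactness strategy as the paper: extract a finite subcover of $[v_1,v_2]$ from the neighborhoods furnished by Lemma~\ref{prior estimate for E(lambda)}, and take $C_0$ and $\hat\delta_0$ as the max and min over the finitely many local constants. The only difference is in handling $\lambda\notin[v_1,v_2]$: the paper shows the union of the covering intervals is an open interval $(\alpha,\beta)\supsetneq[v_1,v_2]$ and then, after shrinking $\hat\delta_0$ by the extra factors $(v_1-\alpha)^{1/m}$ and $(\beta-v_2)^{1/m}$, proves $E^m_{\lambda,\delta}=\varnothing$ for $\lambda\notin(\alpha,\beta)$; your inclusion $E^m_{\lambda,\delta}\subset E^m_{v_i,\delta}$ (for $i=1$ or $2$) bypasses that topological step and the additional shrinkage of $\hat\delta_0$, which is a mild but genuine simplification.
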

\begin{proof}Let us recall the definitions of $v_1$ and $v_2$ in Lemma~\ref{prior estimate for E(lambda)}, namely, $v_1 = \min_{y \in [L_1,L_2]} v(y), \ 
v_2 = \max_{y \in [L_1,L_2]} v(y)$. Noticing that
\[
  [v_1,v_2] \subset \bigcup_{\lambda \in [v_1,v_2]} 
  \bigl(\lambda - (\delta_0(\lambda))^m,\;\lambda + (\delta_0(\lambda))^m \bigr),
\]
by the finite covering theorem there exist $M \in \mathbb{N}^*$ and 
$\lambda_1,\ldots,\lambda_M \in [v_1,v_2]$ such that
\begin{equation}\label{jh}
  [v_1,v_2] \subset \bigcup_{j=1}^M
  \bigl(\lambda_j - (\delta_0(\lambda_j))^m,\;\lambda_j + (\delta_0(\lambda_j))^m \bigr) =: B.
\end{equation}

Let
\begin{align}\label{C}
\begin{split}
  \alpha = \inf B, \quad\beta  = \sup B, \quad
  C_0    = \max\{C(\lambda_1),\ldots,C(\lambda_M)\},
\end{split}
\end{align}
then $C_0 > 0$.  Since $[v_1,v_2] \subset B$, we have $\alpha \leq v_1$.  

If $\alpha = v_1$, then $\inf B = v_1$. For any 
$j \in \{1,\ldots,M\}$,
\[
\lambda_j - (\delta_0(\lambda_j))^m
= \inf\bigl(\lambda_j - (\delta_0(\lambda_j))^m,\;\lambda_j + (\delta_0(\lambda_j))^m\bigr)
\geq \inf B = v_1,
\]
which implies that 
\[
v_1 \notin 
(\lambda_j - (\delta_0(\lambda_j))^m,\;\lambda_j + (\delta_0(\lambda_j))^m).
\]
Since $j$ is arbitrary, we deduce that
\[
v_1 \notin \bigcup_{j=1}^M
(\lambda_j - (\delta_0(\lambda_j))^m,\;\lambda_j + (\delta_0(\lambda_j))^m) = B,
\]
which contradicts $v_1 \in [v_1,v_2] \subset B$.  Therefore, $\alpha < v_1$, and similarly, $\beta > v_2$.

In fact, going further, one can prove that $B = (\alpha,\beta)$, and the proof
requires some knowledge of topology. Indeed, for any $y',y'' \in B$, there exist $i,j \in \{1,\ldots,M\}$ such that
\[
y' \in \bigl(\lambda_i - (\delta_0(\lambda_i))^m,\,\lambda_i + (\delta_0(\lambda_i))^m\bigr), 
\quad 
y'' \in \bigl(\lambda_j - (\delta_0(\lambda_j))^m,\,\lambda_j + (\delta_0(\lambda_j))^m\bigr).
\]
Define
\begin{align*}
\phi(t) &= 
\begin{cases}
 y' + 3t(\lambda_i - y'), & t \in [0,1/3], \\[0.5ex]
 \lambda_i + 3(t-1/3)(\lambda_j - \lambda_i), & t \in (1/3,2/3], \\[0.5ex]
 \lambda_j + 3(t-2/3)(y'' - \lambda_j), & t \in (2/3,1].
\end{cases}
\end{align*}
\begin{itemize}
    \item For $t \in [0,1/3]$, we have $\phi(t) \in (\lambda_i - (\delta_0(\lambda_i))^m,\,
\lambda_i + (\delta_0(\lambda_i))^m) \subset B$;
\item for $t \in (1/3,2/3]$, we have 
$\phi(t) \in [\min\{\lambda_i,\lambda_j\},\max\{\lambda_i,\lambda_j\}] 
\subset [v_1,v_2] \subset B$; 
\item for $t \in (2/3,1]$, we have 
$\phi(t) \in (\lambda_j - (\delta_0(\lambda_j))^m,\,\lambda_j + (\delta_0(\lambda_j))^m)
\subset B$.  
\end{itemize}
Thus $\phi$ is a path in $B$ connecting $y'$ and $y''$, hence $B$ is path-connected. What's more, as a union of some open intervals, $B$ is an open subset of $\mathbb{R}$.
Since the open path-connected subsets of $\mathbb{R}$ are open intervals, we conclude that
\begin{equation}\label{ab}
B = (\inf B, \sup B) = (\alpha,\beta).
\end{equation}

Now set 
\[
\hat{\delta}_0 = \tfrac{1}{2}\min\Bigl\{(v_1-\alpha)^{1/m},\,
(\beta-v_2)^{1/m},\,\delta_0(\lambda_1),\ldots,\delta_0(\lambda_M)\Bigr\},
\]
so that $\hat{\delta}_0 > 0$. For any $\lambda \in \mathbb{R}$ and any 
$\delta \in (0,\hat{\delta}_0]$, we have
\begin{equation}\label{zz}
m(E^m_{\lambda,\delta}) \leq C_0 \delta.
\end{equation}

\begin{itemize}
    \item[(1)] Indeed, if $\lambda \in (\alpha,\beta)$, then there exists $j' \in \{1,\ldots,M\}$ such that
$\lambda \in (\lambda_{j'} - (\delta_0(\lambda_{j'}))^m,\,
\lambda_{j'} + (\delta_0(\lambda_{j'}))^m)$, which implies $m(E^m_{\lambda,\delta})  \leq C_0 \delta$.
\item[(2)] If $\lambda \in \mathbb{R}\setminus(\alpha,\beta)$, then either $\lambda \leq \alpha$ 
or $\lambda \geq \beta$.
\begin{itemize}
    \item[(2.1)] If $\lambda \leq \alpha$, then for any $y \in [L_1,L_2]$, we have $
v(y)-\lambda \geq v_1 - \alpha \geq (2\hat{\delta}_0)^m > \delta^m$, hence $y \notin E^m_{\lambda,\delta}$, which gives 
\begin{align}
    \label{my}E^m_{\lambda,\delta}= \varnothing,\quad  m(E^m_{\lambda,\delta}) =0.
\end{align}
\item[(2.2)]If $\lambda \geq \beta$, then for any $y \in [L_1,L_2]$, we have $v(y)-\lambda \leq v_2 - \lambda \leq -(2\hat{\delta}_0)^m < -\delta^m$, hence $y \notin E^m_{\lambda,\delta}$, which gives 
\begin{align}
    \label{l}E^m_{\lambda,\delta}= \varnothing,\quad  m(E^m_{\lambda,\delta}) =0.\end{align}
\end{itemize}
\end{itemize}
Thus, \eqref{zz} holds, which completes the proof of the lemma.    
\end{proof}

Next, we obtain the following uniform bound for $m(\mathcal{E}^m_{\lambda,\delta})$ with respect to $\lambda$.

\begin{lemma}\label{mathcal{E}(lambda)}
Let $d=1$, $m \geq 1$, and $L_1, L_2$ are arbitrary real numbers with $L_1 < L_2$. Suppose $v:[L_1,L_2] \to \mathbb{R}$ is a $C^m$ function satisfying the $m$-th order non-degeneracy condition
\[
 |v'(y)| + |v''(y)| + \cdots + |v^{(m)}(y)| > 0,\quad \forall\, y \in [L_1,L_2].
\]
Then there exist constants $C_0' > 0$ and $\hat{\delta}_0 \in (0,1)$ such that, 
for any $\delta \in (0,\hat{\delta}_0]$ and any $\lambda \in \mathbb{R}$, we have
\[
 m(\mathcal{E}^m_{\lambda,\delta})\leq C_0' \delta .
\]
\end{lemma}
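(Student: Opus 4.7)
The strategy is to upgrade Lemma \ref{|E(lambda)|} from a pure measure bound on $E^m_{\lambda,\delta}$ to a \emph{covering} bound by a uniformly bounded number of intervals, and then observe that passing to the $\delta$-neighborhood only enlarges each interval by $2\delta$.

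First, I would split on $\lambda$ using the interval $(\alpha,\beta)$ built in the proof of Lemma \ref{|E(lambda)|}. Take the same $\hat{\delta}_0$ as there. If $\lambda\notin(\alpha,\beta)$, the argument in \eqref{my}--\eqref{l} already shows $E^m_{\lambda,\delta}=\varnothing$ for all $\delta\in(0,\hat{\delta}_0]$, so $\mathcal{E}^m_{\lambda,\delta}=\varnothing$ and the bound is trivial. If instead $\lambda\in(\alpha,\beta)$, then by \eqref{jh} there exists $j'\in\{1,\ldots,M\}$ with $\lambda\in(\lambda_{j'}-(\delta_0(\lambda_{j'}))^m,\lambda_{j'}+(\delta_0(\lambda_{j'}))^m)$.

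Now I would revisit the proof of Lemma \ref{prior estimate for E(lambda)} and extract the covering statement rather than just the measure bound. Applied at the base point $\lambda_{j'}$ with parameter $\lambda$, \eqref{cf} (odd case) and \eqref{kl} (even case) yield intervals $\{I_i(\lambda_{j'},\delta)\}$, or pairs of intervals $\{J_i(\lambda_{j'},\delta),K_i(\lambda_{j'},\delta)\}$, such that
\[
  E^m_{\lambda,\delta} \;\subset\; \bigcup_{i=1}^{N(\lambda_{j'})} I_i(\lambda_{j'},\delta) \quad\text{or}\quad \bigcup_{i=1}^{N(\lambda_{j'})} \bigl(J_i(\lambda_{j'},\delta)\cup K_i(\lambda_{j'},\delta)\bigr),
\]
where the total number of intervals is at most $2N(\lambda_{j'})\le 2N_0$ by Lemma \ref{Uniformly finitely many intersection points}, and the sum of their lengths is at most $C(\lambda_{j'})\delta\le C_0\delta$ by \eqref{qiuhe} and \eqref{C}.

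Next, I would use the elementary fact that the $\delta$-neighborhood of an interval $I\subset\mathbb{R}$ (intersected with $\Omega$) is again an interval of length at most $|I|+2\delta$. Applied to each covering interval,
\[
  \mathcal{E}^m_{\lambda,\delta} \;\subset\; \bigcup_i \tilde I_i, \qquad |\tilde I_i|\le |I_i|+2\delta,
\]
so
\[
  m(\mathcal{E}^m_{\lambda,\delta}) \;\le\; \sum_i |I_i| \;+\; 2\delta\cdot(\#\text{intervals}) \;\le\; C_0\,\delta \;+\; 4N_0\,\delta.
\]
Setting $C_0':=C_0+4N_0$ and keeping the same $\hat\delta_0$ concludes the argument.

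The only genuine work lies in step two, i.e.\ making sure that the covering bound from Lemma \ref{prior estimate for E(lambda)} is uniform in $\lambda$. This uniformity is guaranteed because (i) the number of base points $\lambda_{j'}$ is finite by the compactness argument in Lemma \ref{|E(lambda)|}, and (ii) for each base point the number of covering intervals is controlled by $2N(\lambda_{j'})\le 2N_0$, which is uniform by Lemma \ref{Uniformly finitely many intersection points}. No new analytic estimates are required; everything reduces to bookkeeping of the intervals already produced in the previous lemmas.
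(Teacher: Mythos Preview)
Your proposal is correct and follows essentially the same approach as the paper: split on whether $\lambda\in(\alpha,\beta)$, recycle the covering intervals $I_i$, $J_i$, $K_i$ from \eqref{cf} and \eqref{kl} at the finitely many base points $\lambda_{j'}$, fatten each by $2\delta$, and sum using the uniform bound $N(\lambda_{j'})\le N_0$. The only cosmetic difference is bookkeeping of the constant---you sum the interval lengths globally to get $C_0\delta+4N_0\delta$, whereas the paper bounds each $|I_i|\le C_0\delta$ individually to obtain $N_0(C_0+4)\delta$; both are valid.
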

\begin{proof}Define $C_0':\,=\; N_0\,(C_0+4),$ where $N_0$ is given by Lemma \ref{Uniformly finitely many intersection points}. From \eqref{jh} and \eqref{ab}, we obtain
\begin{equation}\label{bjj}
  [v_1,v_2]\;\subset\;\bigcup_{j=1}^M
  \bigl(\lambda_j-(\delta_0(\lambda_j))^m,\;\lambda_j+(\delta_0(\lambda_j))^m\bigr)
  \;=\;(\alpha,\beta).
\end{equation}
Fix any $\lambda\in\R$ and any $\delta\in(0,\hat\delta_0],$ where $\hat{\delta_0}$ is given in Lemma \ref{|E(lambda)|}.

If $\lambda\in(-\infty,\alpha]\cup[\beta,+\infty)$, then by \eqref{my} and \eqref{l} we have
$E^m_{\lambda,\delta}= \varnothing$, hence
\begin{align*}
  \mathcal{E}^m_{\lambda,\delta}
  = \bigl\{y\in[L_1,L_2]: \mathrm{dist}(y,E^m_{\lambda,\delta})<\delta \bigr\}
  = \bigl\{y\in[L_1,L_2]: \mathrm{dist}(y,\varnothing)<\delta \bigr\}
  = \varnothing,
\end{align*}
so that $m(\mathcal{E}^m_{\lambda,\delta})=0$.

If $\lambda\in(\alpha,\beta)$, then by \eqref{jh} and \eqref{ab} there exists
$j''\in\{1,\ldots,M\}$ such that
\begin{equation}\label{bh}
  \lambda\in\bigl(\lambda_{j''}-(\delta_0(\lambda_{j''}))^m,\;
                    \lambda_{j''}+(\delta_0(\lambda_{j''}))^m\bigr).
\end{equation}
By \eqref{el} and \eqref{jj}, there exist $\varepsilon>0$ and $N(\lambda)\in \mathbb{N}^*$ as well as points
$\tilde{y}_1,\tilde{y}_2,\ldots,\tilde{y}_{N(\lambda)}\in[L_1,L_2]$ with
\[
  \tilde{y}_1<\tilde{y}_2<\cdots<\tilde{y}_{N(\lambda)}
\]
such that 
$$E^m_{\lambda,\delta}\;=\;\bigcup_{i=1}^{N(\lambda)} E^{m,i}_{\lambda,\delta}=\;\bigcup_{i=1}^{N(\lambda)} \left((\tilde{y}_i-\eps,\tilde{y}_i+\eps)\cap E^{m}_{\lambda,\delta}\right).$$ Hence,
\begin{align}\label{cc}
\begin{split}
  \mathcal{E}^m_{\lambda,\delta}
  &= \Bigl\{y\in[L_1,L_2]:\mathrm{dist}(y,E^m_{\lambda,\delta})<\delta\Bigr\}= \Bigl\{y\in[L_1,L_2]:
\mathrm{dist}\!\Bigl(y,\bigcup_{i=1}^{N(\lambda)}E^{m,i}_{\lambda,\delta}\Bigr)<\delta\Bigr\}\\
  &= \bigcup_{i=1}^{N(\lambda)}
     \Bigl\{y\in[L_1,L_2]:\mathrm{dist}\bigl(y,E^{m,i}_{\lambda,\delta}\bigr)<\delta\Bigr\}\\
  &\subset \bigcup_{i=1}^{N(\lambda)}
     \Bigl\{y\in\R:\mathrm{dist}\bigl(y,E^{m,i}_{\lambda,\delta}\bigr)<\delta\Bigr\}
     \;=: \bigcup_{i=1}^{N(\lambda)} H_i(\lambda,\delta).
\end{split}
\end{align}

According to \eqref{cf} and \eqref{jh}, for each $i\in\{1,\ldots,N(\lambda)\}$ there are two
cases:
\begin{align}\label{er}
\begin{split}
 &\text{Case d.1: } \tilde{k}_i \text{ is odd;}\\
 &\text{Case d.2: } \tilde{k}_i \text{ is even.}
\end{split}
\end{align}
Here $\tilde{k}_i$ is defined in \eqref{ki}. 

In Case d.1 (i.e. $\tilde{k}_i$ odd), by \eqref{bh},
\[
  \lambda\in\bigl(\lambda_{j''}-(\delta_0(\lambda_{j''}))^m,\;
                   \lambda_{j''}+(\delta_0(\lambda_{j''}))^m\bigr)
  \subset \bigl(\lambda_{j''}-\hat\delta_i(\lambda_{j''}),\;
                \lambda_{j''}+\hat\delta_i(\lambda_{j''})\bigr),
\]
hence by \eqref{cf} there exists an interval $I_i(\lambda_{j''},\delta)$ such that
\begin{equation}\label{ji}
  E^{m,i}_{\lambda,\delta}\subset I_i(\lambda_{j''},\delta),
  \qquad |I_i(\lambda_{j''},\delta)|
  \le C_i(\lambda_{j''})\,\delta
  \overset{\eqref{qiuhe}}{\le} C(\lambda_{j''})\,\delta
  \overset{\eqref{C}}{\le} C_0\,\delta.
\end{equation}
Therefore, by \eqref{cc},
\begin{equation}\label{h}
  H_i(\lambda,\delta)\subset
  \{y\in\R:\mathrm{dist}(y,I_i(\lambda_{j''},\delta))<\delta\}
  =:\widehat I_i(\lambda_{j''},\delta),
\end{equation}
so $\widehat I_i(\lambda_{j''},\delta)$ is also an interval and
\begin{equation}\label{hi}
  |H_i(\lambda,\delta)|
  \overset{\eqref{h}}{\le}|\widehat I_i(\lambda_{j''},\delta)|
  \le |I_i(\lambda_{j''},\delta)|+2\delta
  \overset{\eqref{ji}}{\le}(C_0+2)\delta
  <(C_0+4)\delta.
\end{equation}

In Case d.2 (i.e. $\tilde{k}_i$ even), again by \eqref{bh},
\[
  \lambda\in\bigl(\lambda_{j''}-(\delta_0(\lambda_{j''}))^m,\;
                   \lambda_{j''}+(\delta_0(\lambda_{j''}))^m\bigr)
  \subset \bigl(\lambda_{j''}-\hat\delta_i(\lambda_{j''}),\;
                \lambda_{j''}+\hat\delta_i(\lambda_{j''})\bigr),
\]
and by \eqref{kl} there exist sets $K_i(\lambda_{j''},\delta)$ and $J_i(\lambda_{j''},\delta)$
such that
\begin{equation}\label{bjjj}
  E^{m,i}_{\lambda,\delta}\subset
  K_i(\lambda_{j''},\delta)\cup J_i(\lambda_{j''},\delta),
\end{equation}
and
\begin{align}\label{kjjj}
\begin{split}
  |K_i(\lambda_{j''},\delta)|
    &\le \frac{C_i(\lambda_{j''})}{2}\,\delta
     \overset{\eqref{qiuhe}}{\le} \frac{C(\lambda_{j''})}{2}\,\delta
     \le \frac{C_0}{2}\,\delta,\\
  |J_i(\lambda_{j''},\delta)|
    &\le \frac{C_i(\lambda_{j''})}{2}\,\delta
     \overset{\eqref{qiuhe}}{\le} \frac{C(\lambda_{j''})}{2}\,\delta
     \le \frac{C_0}{2}\,\delta.
\end{split}
\end{align}
Using \eqref{cc} and \eqref{bjjj}, we have
\begin{align}\label{H}
\begin{split}
  H_i(\lambda,\delta)
  &= \{y\in\R:\mathrm{dist}(y,E^{m,i}_{\lambda,\delta})<\delta\}\\
  &\subset \{y\in\R:\mathrm{dist}(y,K_i(\lambda_{j''},\delta)\cup J_i(\lambda_{j''},\delta))<\delta\}\\
  &= \{y\in\R:\mathrm{dist}(y,K_i(\lambda_{j''},\delta))<\delta\}
     \;\cup\;
     \{y\in\R:\mathrm{dist}(y,J_i(\lambda_{j''},\delta))<\delta\}\\
  &=: \widehat K_i(\lambda_{j''},\delta)\,\cup\,\widehat J_i(\lambda_{j''},\delta),
\end{split}
\end{align}
where $\widehat K_i(\lambda_{j''},\delta)$ and $\widehat J_i(\lambda_{j''},\delta)$ are
intervals satisfying
\begin{align}\label{jk}
\begin{split}
  |\widehat K_i(\lambda_{j''},\delta)|
    &\le |K_i(\lambda_{j''},\delta)|+2\delta
     \overset{\eqref{kjjj}}{\le} \Bigl(\frac{C_0}{2}+2\Bigr)\delta,\\
  |\widehat J_i(\lambda_{j''},\delta)|
    &\le |J_i(\lambda_{j''},\delta)|+2\delta
     \overset{\eqref{kjjj}}{\le} \Bigl(\frac{C_0}{2}+2\Bigr)\delta.
\end{split}
\end{align}
Therefore,
\begin{equation}\label{hkj}
  |H_i(\lambda,\delta)|
  \overset{\eqref{H}}{\le}
  |\widehat K_i(\lambda_{j''},\delta)|+|\widehat J_i(\lambda_{j''},\delta)|
  \overset{\eqref{jk}}{\le} (C_0+4)\,\delta.
\end{equation}

Consequently,
\begin{align*}
  m(\mathcal{E}^m_{\lambda,\delta})
   &\overset{\eqref{cc}}{\le} \sum_{i=1}^{N(\lambda)} |H_i(\lambda,\delta)|
    \;\overset{\eqref{hi},\eqref{hkj}}{\le}\;
      N(\lambda)\,(C_0+4)\,\delta
    \;\overset{\textrm{Lemma}\ \ref{Uniformly finitely many intersection points}}{\le}\;
      N_0\,(C_0+4)\,\delta
    \;=\; C_0'\,\delta.
\end{align*}
This completes the proof of Lemma \ref{mathcal{E}(lambda)}.
\end{proof}

\section{Uniform-in-$\lambda$ estimate of $m(\mathcal{E}_{\lambda,\delta}^m)$ in non-compact regions}
\label{sec3}

With the preparations from Lemmas \ref{piecewise strict monotonicity}–\ref{mathcal{E}(lambda)} in place, we now return to the proof of Proposition \ref{mathcal e measure-prop}.
\begin{proposition}\label{mathcal e measure-prop}
Assume $d=1$ and $v : \Omega\to \mathbb{R}$ is a $C^{m}$ function, where $m$ and $\Omega$ are the same as in Theorem \ref{thm:main1}, and $v$ satisfies conditions \eqref{eq:vder} and \eqref{non-degenerate at infinity} of Theorem \ref{thm:main1}. Then
there exist constant $C > 0$ and  $\delta_0>0$ such that, for all $\nu > 0$, all $k \neq 0$ and all $\lambda\in\mathbb{R}\ and\ \delta\in(0,\delta_0],$ 
\begin{equation}\label{mathcal e measure}
m(\mathcal{E}_{\lambda,\delta}^m) \leq C\delta.
\end{equation}
\end{proposition}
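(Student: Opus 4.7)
The plan is to reduce to the compact case already handled by Lemma~\ref{mathcal{E}(lambda)} by carving off a sufficiently large compact subinterval of $\Omega$, and then using the non-degeneracy at infinity \eqref{non-degenerate at infinity} to control the remaining tails, on which $v$ is strictly monotone with derivative bounded below in absolute value. First, by \eqref{non-degenerate at infinity} I pick $R>0$ large enough that $|v'(y)|\ge c_0/2$ for every $y\in\Omega$ with $|y|\ge R$. By continuity, $v'$ has constant sign on each connected component of $\Omega\cap\{|y|\ge R\}$, so on each such tail $v$ is a $C^1$ diffeomorphism with $|v'|\ge c_0/2$. Set $\Omega_c:=\overline{\Omega}\cap[-R-1,R+1]$, enlarging $R$ if necessary so that $\Omega_c$ is a non-degenerate compact interval, and let $\Omega_t:=\Omega\setminus\Omega_c$ (at most two tails).

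For the compact part, $v|_{\Omega_c}$ inherits the $m$-th order non-degeneracy \eqref{eq:vder}, so Lemma~\ref{mathcal{E}(lambda)} applied with ambient interval $\Omega_c$ yields constants $C_0',\hat\delta_0>0$ such that
\[
  m\bigl(\{y\in\Omega_c:\operatorname{dist}(y,E^m_{\lambda,\delta}\cap\Omega_c)<\delta\}\bigr)\le C_0'\,\delta,
\]
uniformly in $\lambda\in\R$, for every $\delta\in(0,\hat\delta_0]$. For the tail part, the monotonicity together with $|v'|\ge c_0/2$ forces $E^m_{\lambda,\delta}\cap\Omega_t$ to be a union of at most two intervals (one per tail) each of length at most $2\delta^m/(c_0/2)=4\delta^m/c_0\le 4\delta/c_0$, using $m\ge 1$ and $\delta\le 1$; its $\delta$-thickening in $\R$ is therefore a union of at most two intervals each of length at most $(4/c_0+2)\delta$.

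Combining via the set inclusion $\{y\in\Omega:\operatorname{dist}(y,A\cup B)<\delta\}\subset\{y:\operatorname{dist}(y,A)<\delta\}\cup\{y:\operatorname{dist}(y,B)<\delta\}$ applied to $A=E^m_{\lambda,\delta}\cap\Omega_c$ and $B=E^m_{\lambda,\delta}\cap\Omega_t$, and choosing $\delta_0:=\min\{1,\hat\delta_0\}$, the three contributions — the bound on $\Omega_c$, the spillover of the first set's thickening into $\Omega_t$ (confined to a $\delta$-strip of measure at most $2\delta$), and the tail's thickening — sum to $m(\mathcal{E}^m_{\lambda,\delta})\le C\delta$ for some constant $C$ depending only on $C_0'$ and $c_0$. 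The main subtlety I anticipate is the bookkeeping at the junction $|y|=R+1$: because $\mathcal{E}^m_{\lambda,\delta}$ is the $\delta$-thickening of the \emph{entire} set $E^m_{\lambda,\delta}$ rather than of its pieces, one must verify that splitting $E^m_{\lambda,\delta}$ and thickening separately does not undercount, which follows from the union-of-neighborhoods identity above together with the $1$-unit buffer built into the definition of $\Omega_c$. The remaining shapes $\Omega=(-\infty,L_1)$ or $(L_2,\infty)$ require only cosmetic modifications, since at most one tail is present and $\Omega_c$ is taken to include the finite endpoint.
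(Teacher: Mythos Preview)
Your proposal is correct and follows essentially the same strategy as the paper: split $\Omega$ into a compact core and monotone tails, apply Lemma~\ref{mathcal{E}(lambda)} on the core, and bound the tails using $|v'|\ge c_0/2$ and the mean value theorem. The paper's version differs only in bookkeeping---it cuts at $\pm 2\Delta$ and applies the compact lemma on the larger interval $[-3\Delta,3\Delta]$ (so the $\delta$-thickening of the core piece stays inside the interval where the lemma was invoked), and it handles the tail by an explicit case analysis on the position of $\lambda$ relative to $v(\Delta)$ rather than your one-line MVT bound; your treatment of the spillover at the junction via the $2\delta$ strip is an equally valid alternative to their buffer.
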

\begin{proof}[\bf Proof of Proposition \ref{mathcal e measure-prop}]
Without loss of generality, we present the proof for $\Omega=\mathbb{R}$.

Since $\varliminf_{|y|\to\infty}|v'(y)|\ge c_0>0$, there exists $\Delta>0$ with
\begin{equation}\label{xjx}
  |v'(y)| \ge \tfrac{c_0}{2} \quad \text{for all } |y|\ge \Delta.
\end{equation}

We now prove that $m(\mathcal{E}^{m}_{\lambda,\delta}) = O(\delta)$ uniformly for $\lambda\in\mathbb{R}$; that is, there exist constants $C_0>0$ and $\delta_0>0$ such that, for any $\lambda\in\mathbb{R}$ and any $\delta\in(0,\delta_0]$,
\[
  m(\mathcal{E}^{m}_{\lambda,\delta}) \le C_0\,\delta.
\]

From \eqref{xjx}, either $ v'(y)\ge \tfrac{c_0}{2} \text{ or } v'(y)\le -\tfrac{c_0}{2} (\forall y\in[\Delta,\infty))$. Without loss of generality, assume that $v'(y)\ge \tfrac{c_0}{2}$ holds. Then $v$ is strictly increasing on $[\Delta,\infty)$, and for all $y\ge \Delta$,
\[
  v(y)=v(\Delta)+\int_{\Delta}^{y} v'(z)\,dz
  \;\ge\; v(\Delta)+\frac{c_0}{2}\,(y-\Delta),
\]
hence $v(+\infty)=+\infty$.

For any $\delta\in(0,\Delta]$ and any $\lambda\in\mathbb{R}$, set
\begin{align}\label{FF}
\begin{split}
  E^{m}_{\lambda,\delta}
  = &\bigl(E^{m}_{\lambda,\delta}\cap(-\infty,-2\Delta]\bigr)
     \;\cup\; \bigl(E^{m}_{\lambda,\delta}\cap[-2\Delta,2\Delta]\bigr)
     \;\cup\; \bigl(E^{m}_{\lambda,\delta}\cap[2\Delta,\infty)\bigr) \\
   =&:F_1 \cup F_2 \cup F_3 .
\end{split}
\end{align}
Define
\begin{align}\label{G}
\begin{split}
 & \mathcal{E}^{m}_{\lambda,\delta}
  = \{y\in\mathbb{R}:\mathrm{dist}(y,E^{m}_{\lambda,\delta})<\delta\} \\
  = &\{y\in\mathbb{R}:\mathrm{dist}(y,F_1)<\delta\}
     \;\cup\; \{y\in\mathbb{R}:\mathrm{dist}(y,F_2)<\delta\}
     \;\cup\; \{y\in\mathbb{R}:\mathrm{dist}(y,F_3)<\delta\} \\
 =&: G_1 \cup G_2 \cup G_3 .
\end{split}
\end{align}
Moreover, define
\begin{align}\label{hh}
\begin{split}
  H_1 &:= E^{m}_{\lambda,\delta}\cap(-\infty,-\Delta)
         \;\supset\; E^{m}_{\lambda,\delta}\cap(-\infty,-2\Delta] \;=\; F_1, \\
  H_2 &:= E^{m}_{\lambda,\delta}\cap(-3\Delta,3\Delta)
         \;\supset\; E^{m}_{\lambda,\delta}\cap[-2\Delta,2\Delta] \;=\; F_2, \\
  H_3 &:= E^{m}_{\lambda,\delta}\cap(\Delta,\infty)
         \;\supset\; E^{m}_{\lambda,\delta}\cap[2\Delta,\infty) \;=\; F_3 .
\end{split}
\end{align}
Then
\begin{align}\label{GG}
\begin{split}
  &G_1 \subset (-\infty,-2\Delta+\delta) \subset (-\infty,-\Delta),\\
  &G_2 \subset (-2\Delta-\delta,\,2\Delta+\delta) \subset (-3\Delta,3\Delta),\\
  &G_3 \subset (2\Delta-\delta,\infty) \subset (\Delta,\infty).
\end{split}
\end{align}
Consequently,
\begin{align}\label{gs}
\begin{split}
  G_3
  &= \{y\in\mathbb{R}:\mathrm{dist}(y,F_3)<\delta\} \overset{\eqref{GG}}{=} \{y\in(\Delta,\infty):\mathrm{dist}(y,F_3)<\delta\} \\
  &\overset{\eqref{hh}}{\subset} \{y\in(\Delta,\infty):\mathrm{dist}(y,H_3)<\delta\}.
\end{split}
\end{align}

We distinguish the following three cases:
\begin{align}\label{sz}
\begin{split}
  &\text{Case e.1: } v(\Delta)\le \lambda-\delta,\\
  &\text{Case e.2: } \lambda-\delta< v(\Delta)< \lambda+\delta,\\
  &\text{Case e.3: } \lambda+\delta\le v(\Delta).
\end{split}
\end{align}

\begin{itemize}
    \item For Case e.1, by the strict monotonicity of $v$,
there exists a unique pair $(\Delta_1,\Delta_2)$ with $\Delta_1\ge \Delta$ and
$\Delta_2\ge \Delta$ such that
\begin{equation}\label{dd}
  v(\Delta_1)=\lambda-\delta,\qquad v(\Delta_2)=\lambda+\delta,
\end{equation}
and moreover
\begin{equation}\label{jz}
  \Delta_1<\Delta_2,\qquad H_3=(\Delta_1,\Delta_2).
\end{equation}
Hence, by \eqref{gs} and \eqref{jz},
\[
  G_3 \subset \{\,y\in(\Delta,\infty): \text{dist}\bigl(y,(\Delta_1,\Delta_2)\bigr)<\delta \,\}
      \subset (\Delta_1-\delta,\Delta_2+\delta),
\]
and thus
\begin{equation}\label{dede}
  m(G_3) \le (\Delta_2+\delta)-(\Delta_1-\delta) = 2\delta + \Delta_2-\Delta_1.
\end{equation}
Moreover,
\begin{align*}
  2\delta
  &= (\lambda+\delta)-(\lambda-\delta)
   \overset{\eqref{dd}}{=} v(\Delta_2)-v(\Delta_1)
   = \int_{\Delta_1}^{\Delta_2} v'(y)\,dy
   \ge \int_{\Delta_1}^{\Delta_2} \frac{c_0}{2}\,dy
   = \frac{c_0}{2}\,(\Delta_2-\Delta_1),
\end{align*}
whence
\begin{equation}\label{ddd}
  \Delta_2-\Delta_1 \le \frac{4}{c_0}\,\delta.
\end{equation}
Combining \eqref{dede} and \eqref{ddd}, we obtain
\begin{equation}\label{ggg}
  m(G_3) \le \Bigl(2+\frac{4}{c_0}\Bigr)\delta.
\end{equation}
\item For Case e.2, by the strict monotonicity of $v$,
there exists a unique $\Delta_3>\Delta$ such that
\[
  v(\Delta_3)=\lambda+\delta,
  \qquad H_3=(\Delta,\Delta_3).
\]
Hence, by \eqref{gs} and \eqref{jz},
\[
  G_3 \subset \{\,y\in(\Delta,\infty): \text{dist}\bigl(y,(\Delta,\Delta_3)\bigr)<\delta \,\}
      \subset (\Delta-\delta,\Delta_3+\delta),
\]
and thus
\begin{equation}\label{dedede}
  m(G_3) \le (\Delta_3+\delta)-(\Delta-\delta) = 2\delta + \Delta_3-\Delta.
\end{equation}
Moreover, using $v(\Delta_3)=\lambda+\delta$ and $\lambda-\delta< v(\Delta)$,
\[
  v(\Delta_3)-v(\Delta) \le (\lambda+\delta)-(\lambda-\delta)=2\delta,
\]
then 
\[
  v(\Delta_3)-v(\Delta)=\int_{\Delta}^{\Delta_3} v'(y)\,dy
  \ge \int_{\Delta}^{\Delta_3} \frac{c_0}{2}\,dy
  = \frac{c_0}{2}\,(\Delta_3-\Delta).
\]
Hence
\begin{equation}\label{dxx}
  \Delta_3-\Delta \le \frac{4}{c_0}\,\delta.
\end{equation}
Combining \eqref{dedede} and \eqref{dxx}, we obtain
\begin{equation}\label{gsgs}
  m(G_3) \le \Bigl(2+\frac{4}{c_0}\Bigr)\delta.
\end{equation}
\item For Case e.3, by the monotonicity of $v$, since for all $y\ge 2\Delta$ we have
\( v(y) > v(\Delta) \ge \lambda+\delta \),
it follows that \(F_3=\varnothing\). Hence, by \eqref{G},
\begin{equation}\label{lin}
  G_3=\varnothing,\qquad m(G_3)=0.
\end{equation}

\end{itemize}

Therefore, combining \eqref{ggg}, \eqref{gsgs}, and \eqref{lin}, for any $ \delta\in(0,\Delta]$ and $\lambda\in\mathbb{R}$, regardless of which case in \eqref{sz} occurs, we have
\begin{equation}\label{gggg}
  m(G_3)\le \Bigl(2+\frac{4}{c_0}\Bigr)\delta.
\end{equation}

Similarly, for any $\delta\in(0,\Delta]$ and any $\lambda\in\mathbb{R}$,
\begin{equation}\label{ggggg}
  m(G_1)\le \Bigl(2+\frac{4}{c_0}\Bigr)\delta.
\end{equation}

For any $y\in[-3\Delta,3\Delta]\subset\mathbb{R}$, the $m$-th order non-degeneracy condition holds:
\[
  |v'(y)|+\cdots+|v^{(m)}(y)|>0.
\]
By Lemma \ref{mathcal{E}(lambda)}, there exist constants $C_0'>0$ and $\hat{\delta}_0>0$ such that,
for any $\delta\in(0,\hat{\delta}_0]$ and any $\lambda\in\mathbb{R}$,
\begin{equation}\label{gg}
  m(G_2)\le C_0'\,\delta .
\end{equation}

Therefore, combining \eqref{lin}, \eqref{ggggg}, and \eqref{gg}, by taking $$\delta_0=\min\{\Delta,\hat{\delta}_0\},\quad C=4+\f{8}{c_0}+C_0',$$ we obtain that for any $\delta\in(0,\delta_0]$ and any $\lambda\in\mathbb{R},$
\begin{align*}
  m(\mathcal{E}^{m}_{\lambda,\delta})
 \le m(G_1)+m(G_2)+m(G_3)
  \le \Bigl(4+\frac{8}{c_0}+C_0'\Bigr)\delta=C\de.
\end{align*}

This completes the proof of Proposition \ref{mathcal e measure-prop}.
\end{proof}

\section{Proof of the higher-dimensional case}
This section is devoted to the proof of Theorem~\ref{thm:higher-d}.
\begin{proof}[\bf Proof of Theorem~\ref{thm:higher-d}]
For each $j\in\{1,\ldots,d\}$, set
\[
  H_{\nu,k,j}\;=\; -\,\nu\,\partial_{y_j}^2 \;+\; \mathrm{i}\,k\,v_j(y_j),
  \qquad\text{so that}\qquad
  H_{\nu,k}\;=\;\sum_{j=1}^d H_{\nu,k,j}.
\]
Hence, for any $t\ge0$,
\begin{equation}\label{gkt}
  g(k,t)\;=\;\mathrm{e}^{-tH_{\nu,k}}g_0 \;=\; \mathrm{e}^{-t\sum_{j=1}^d H_{\nu,k,j}}\,g_0.
\end{equation}
The identity \eqref{gkt} holds because the operators $H_{\nu,k,j}$ commute pairwise; indeed,
\[
  [H_{\nu,k,j},\,H_{\nu,k,l}] = 0 \qquad \text{for all } j, l\in\{1,\ldots,d\}.
\]

By assumptions \textup{(i)}–\textup{(ii)} (see Theorem~\ref{thm:higher-d}),
when $\Omega_j=(L_{3,j},L_{4,j})$ we apply Lemma~3 (compact case), and when
$\Omega_j\in\{(-\infty,\infty),\,(-\infty,L_{1,j}),\,(L_{2,j},\infty)\}$ we apply
Theorem~1 (non-compact case). In both situations there exist constants $C_{1,j},C_{2,j}>0$
such that the one–dimensional semigroup satisfies the decay bound with rate $\lambda_{\nu,k,j}$.

Iterating this bound coordinate by coordinate and using Fubini, we obtain
\begin{align*}
 \|g(k,t)\|_{L^2(\Omega)}^2
 &= \Bigl\| \mathrm{e}^{-t\sum_{j=1}^d H_{\nu,k,j}} g_0 \Bigr\|_{L^2(\Omega)}^2
   &&\text{by \eqref{gkt}}\\
 &= \int_{\prod_{j=2}^d \Omega_j}
    \Biggl[\int_{\Omega_1}
      \Bigl|\,\mathrm{e}^{-tH_{\nu,k,1}}\!\Bigl(\mathrm{e}^{-t\sum_{j=2}^d H_{\nu,k,j}} g_0\Bigr)\Bigr|^2 \,dy_1
    \Biggr] dy_2\cdots dy_d\\
 &\le C_{1,1}^2\,\mathrm{e}^{-2C_{2,1}\lambda_{\nu,k,1}t}
    \int_{\prod_{j=2}^d \Omega_j}
      \Biggl[\int_{\Omega_1}
        \Bigl|\,\mathrm{e}^{-t\sum_{j=2}^d H_{\nu,k,j}} g_0\Bigr|^2 \,dy_1
      \Biggr] dy_2\cdots dy_d\\
 &= C_{1,1}^2\,\mathrm{e}^{-2C_{2,1}\lambda_{\nu,k,1}t}\,
    \Bigl\| \mathrm{e}^{-t\sum_{j=2}^d H_{\nu,k,j}} g_0 \Bigr\|_{L^2(\Omega)}^2\\
 &\le (C_{1,1}C_{1,2})^2
     \mathrm{e}^{-2(C_{2,1}\lambda_{\nu,k,1}+C_{2,2}\lambda_{\nu,k,2})t}
     \Bigl\| \mathrm{e}^{-t\sum_{j=3}^d H_{\nu,k,j}} g_0 \Bigr\|_{L^2(\Omega)}^2\\
 &\le \cdots \\
 &\le \Bigl(\prod_{j=1}^d C_{1,j}\Bigr)^2
     \exp\Bigl(-2\bigl(C_{2,1}\lambda_{\nu,k,1}+\cdots+C_{2,d}\lambda_{\nu,k,d}\bigr)t\Bigr)
     \,\|g_0\|_{L^2(\Omega)}^2\\
 &\le C^2\,\exp\Bigl(-2c\Bigl(\textstyle\sum_{j=1}^d \lambda_{\nu,k,j}\Bigr)t\Bigr)\,
     \|g_0\|_{L^2(\Omega)}^2,
\end{align*}
where
\[
  C \;=\; \prod_{j=1}^d C_{1,j},
  \qquad
  c \;=\; \min\{C_{2,1},\ldots,C_{2,d}\}.
\]
Taking square roots yields
\[
  \|g(k,t)\|_{L^2(\Omega)}
  \;\le\;
  C\,\exp\Bigl(-\,c\Bigl(\textstyle\sum_{j=1}^d \lambda_{\nu,k,j}\Bigr)t\Bigr)\,
  \|g_0\|_{L^2(\Omega)},
\]
which is the desired estimate.
\end{proof}

\appendix
\section{Basic Calculus}
\begin{lemma}\label{gx}
For any $g \in H^1(\Omega)$ (where $\Omega = \mathbb{R},\,(-\infty,L_{1}],\,[L_{2},\infty)$), one has
\begin{align*}
\|g\|^{2}_{L^{\infty}}
\leq 2 \,\|g\|_{L^{2}}\,\|g'\|_{L^{2}}.
\end{align*}  
\end{lemma}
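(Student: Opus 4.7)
The plan is to deduce the pointwise bound on $g$ from the one-dimensional identity $g(x)^2 = 2\int g\,g'\,dy$ (fundamental theorem of calculus applied to $g^2$), followed by a single application of the Cauchy--Schwarz inequality. The preliminary step is to reduce to a setting where the boundary contribution vanishes at the unbounded end. Since $\Omega$ is either $\mathbb{R}$ or a half-line, any $g\in H^1(\Omega)$ admits a continuous representative that vanishes at infinity; this follows either from the density of $C_c^\infty(\overline{\Omega})$ in $H^1(\Omega)$ or directly from the classical one-dimensional Sobolev embedding $H^1\hookrightarrow C_0$ on the full line and on half-lines. I would therefore first prove the inequality for smooth, decaying $g$ and extend to general $g\in H^1(\Omega)$ by approximation.

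Next, for $\Omega=\mathbb{R}$ and any fixed $x\in\mathbb{R}$, since $g(y)\to 0$ as $y\to -\infty$, the fundamental theorem of calculus gives
\[
 g(x)^2 \;=\; \int_{-\infty}^x \bigl(g^2\bigr)'(y)\,dy \;=\; 2\int_{-\infty}^x g(y)\,g'(y)\,dy.
\]
Bounding the right-hand side by Cauchy--Schwarz, I obtain
\[
 |g(x)|^2 \;\le\; 2\int_{\mathbb{R}} |g(y)|\,|g'(y)|\,dy \;\le\; 2\,\|g\|_{L^2(\mathbb{R})}\,\|g'\|_{L^2(\mathbb{R})},
\]
and taking the supremum over $x$ yields the claimed estimate. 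For $\Omega=(-\infty,L_1]$ the same identity applies verbatim, integrating from $-\infty$ up to $x\in\Omega$; for $\Omega=[L_2,\infty)$ I would instead use the symmetric version
\[
 g(x)^2 \;=\; -\,2\int_x^{\infty} g(y)\,g'(y)\,dy,
\]
which exploits the vanishing of $g$ at $+\infty$, and conclude identically.

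There is essentially no obstacle here: the only subtlety is the decay of $H^1$ functions at the unbounded end of $\Omega$, which is classical and can in any case be sidestepped by the density argument mentioned above. I would note in passing that the constant $2$ is not optimal (the sharp constant on $\mathbb{R}$ is $1$, obtained by averaging the two one-sided identities), but the factor $2$ is what is invoked in the main text at estimate \eqref{inside}.
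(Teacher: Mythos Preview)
Your argument is correct and essentially identical to the paper's: both use the fundamental theorem of calculus on $|g|^2$, the vanishing of $H^1$ functions at the unbounded end, and Cauchy--Schwarz. The only cosmetic difference is that the paper writes $|g(x)|^2=\int_{-\infty}^x\bigl(g\overline g\bigr)'\,dy=\int_{-\infty}^x\bigl(g\overline{g'}+g'\overline g\bigr)\,dy$ to accommodate complex-valued $g$ (as arises from Fourier modes in the application), whereas you implicitly assumed $g$ real; this is a trivial adjustment.
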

\begin{proof}
  Because for any $g \in H^1(\Omega)$ we have $g(-\infty)=0$ (or $g(+\infty)=0$), it follows that for any $x \in \Omega$,
\begin{align*}
\begin{split}
|g(x)|^2
&= \int_{-\infty}^x \bigl(g(y)\overline{g(y)}\bigr)'\,\mathrm{d}y = \int_{-\infty}^x g(y)\overline{g'(y)} + g'(y)\overline{g(y)}\,\mathrm{d}y \\
&\leq 2\int_{-\infty}^x |g(y)|\,|g'(y)|\,\mathrm{d}y \leq 2\|g\|_{L^2}\,\|g'\|_{L^2}.
\end{split}
\end{align*}
\end{proof}

\section*{Acknowledgement}
T. Li is partially supported by  National Natural Science Foundation of China under Grant 12421001.

\section*{Declarations}

\subsection*{Conflict of interest} The authors declare that there are no conflicts of interest.

\subsection*{Data availability}
This article has no associated data.

\end{document}